\numberwithin{equation}{section} 
\theoremstyle{plain}
\newcommand{\field}[1]{\mathbb{#1}}
\newcommand{\R}{\field{R}}
\newcommand{\C}{\field{C}}
\newcommand{\N}{\field{N}}
\newcommand{\ddbar}{\overline\partial}
\newcommand{\pr}{\partial}
\newcommand{\ol}{\overline}
\newcommand{\norm}[1]{\left\Vert#1\right\Vert}
\newcommand{\set}[1]{\left\{#1\right\}}
\newcommand{\To}{\rightarrow}
 \def\cC{\mathscr{C}}
\def\cL{\mathscr{L}}
\DeclareMathOperator{\Dom}{Dom}
\DeclareMathOperator\supp{supp}
\newtheorem{theorem}{Theorem}[section]
\newtheorem{lemma}[theorem]{Lemma}
\newtheorem{corollary}[theorem]{Corollary}
\theoremstyle{definition}
\newtheorem{definition}[theorem]{Definition}
\theoremstyle{definition}
\theoremstyle{remark}
\newtheorem{remark}[theorem]{Remark}
\numberwithin{equation}{section}
\newcommand{\abs}[1]{\lvert#1\rvert}
\begin{document}
%---

\title[]
{Semi-classical spectral asymptotics of Toeplitz operators on strictly pseudodonvex
domains}

\author[Chin-Yu Hsiao]{Chin-Yu Hsiao}
\address{Institute of Mathematics, Academia Sinica, Astronomy-Mathematics Building,
\newline\mbox{\quad}\,No.\ 1, Sec.\ 4, Roosevelt Road, Taipei 10617, Taiwan}
\thanks{}
\email{chsiao@math.sinica.edu.tw or chinyu.hsiao@gmail.com}
\thanks{Partially supported by the Taiwan National Science and 
Technology Council projects 108-2115-M-001-012-MY5, 
109-2923-M-001-010-MY4 and Simons Visiting Professoship of 
the Mathematisches Forschungsinstitut Oberwolfach. }
\author[George Marinescu]{George Marinescu}
\address{Universit{\"a}t zu K{\"o}ln,  Mathematisches Institut,
		Weyertal 86-90, 50931 K{\"o}ln, Germany
		\newline\mbox{\quad}\,Institute of Mathematics `Simion Stoilow', 
		Romanian Academy, Bucharest, Romania}
\thanks{Partially supported by the DFG funded projects
SFB/TRR 191 ``Symplectic Structures in Geometry, Algebra and Dynamics"
(Project-ID 281071066-TRR 191), 
and the ANR-DFG project QuaSiDy (Project-ID 490843120)}

\begin{abstract}
On a relatively compact strictly pseudoconvex domain with smooth boundary
in a complex manifold of dimension $n$
we consider a Toeplitz operator $T_R$ with symbol 
a Reeb-like vector field $R$ near the boundary.
We show that the kernel of a weighted spectral projection $\chi(k^{-1}T_R)$,
where $\chi$ is a cut-off function with compact support in the positive real line,
is a semi-classical Fourier integral operator with complex phase, hence
admits a full asymptotic expansion as $k\to+\infty$. More precisely,
the restriction to the diagonal $\chi(k^{-1}T_R)(x,x)$ decays at the
rate $O(k^{-\infty})$ in the interior and has an asymptotic expansion
on the boundary with leading term of order $k^{n+1}$
expressed in terms of the Levi form and the pairing of the contact form
with the vector field $R$.
\end{abstract}

\maketitle \tableofcontents

\section{Introduction}\label{s-gue230520yyd}
Since the introduction of the Bergman kernel in \cite{Berg32}, and the subsequent 
groundbreaking work by H\"ormander \cite{Hor65},  Fefferman \cite{Fer74}, 
and Boutet de Monvel and Sj\"ostrand \cite{BS75}, the study of the 
Bergman kernel has been a central subject in several complex variables
and complex geometry. 

Let $M$ be a relatively compact strictly pseudoconvex domain 
with smooth boundary in 
a complex manifold $M'$ and let
$B:L^2(M)\To H^0_{(2)}(M)$ be the Bergman projection, that is,
the orthogonal projection from the space
of square-integrable functions $L^2(M)$
onto the space of $L^2$ holomorphic functions on $M$. 
The Bergman kernel $B(x,y)$ is the Schwartz kernel of $B$.
Fefferman~\cite{Fer74} obtained the complete asymptotic expansion of
the diagonal Bergman kernel $B(x,x)$ at the boundary.
Subsequently, Boutet de Monvel-Sj\"{o}strand~\cite{BS75} 
described the singularity of the full Bergman kernel $B(x,y)$
by showing that it is a Fourier integral operator with
complex phase.
They also obtained in~\cite{BS75} a full asymptotic expansion for 
the Szeg\H{o} projection on a strictly pseudoconvex CR manifold. 
All the results mentioned above are about microlocal behavior 
of the Szeg\H{o} and Bergman kernels. 
Some of these results were recently extended to weakly pseudoconvex domains
of finite type in $\C^2$ in \cite{HS22}.
The structure of the Szeg\H{o} projector also plays an important 
role in the quantization of CR manifolds \cite{HMM}.

On the other hand, 
semi-classical analysis plays an important role in 
modern complex geometry. For example, 
we can study many important problems in complex geometry 
by using semi-classical 
Bergman kernel asymptotics~\cite{DLM06, HM14CAG, HM17CPDE, MM07}. 
Therefore,  we believe that it is 
important to study classical several complex variables from semi-classical viewpoint. 
To this end, it is important to have semiclassical versions of the 
Boutet de Monvel-Sj\"ostrand's and Fefferman's results 
on strictly pseudoconvex CR manifolds and on complex 
manifolds with strictly pseudoconvex boundary. 
Recently,  
we obtained jointly with Herrmann and Shen~\cite{HHMS23} 
a semi-classical version of the 
Boutet de Monvel and Sj\"ostrand's result on a strictly peudoconvex 
CR manifold and as applications, we established Kodaira type embedding 
theorem and Tian type theorem on a strictly pseudoconvex CR manifold. 

It is natural to establish similar results as in~\cite{HHMS23} for complex manifolds with boundary. %This is the motivation of this work. 
In this paper, we consider 
the operator $\chi_k(T_R)$ constructed by functional
calculus, where $\chi_k(\lambda)=\chi(k^{-1}\lambda)$ is a rescaled cut-off function $\chi$ 
with compact support in the positive real line, $k\in\R_+$ is a semi-classical parameter,
and $T_R$ is the Toeplitz operator on the domain $M$ associated with 
a first-order differential operator given by
a Reeb-like vector field from in the neighborhood of $X$.
We show that $\chi_k(T_R)$
admits a full asymptotic expansion as $k\to+\infty$. This result can be seen as a 
semi-classical version of the 
Boutet de Monvel-Sj\"ostrand's and Fefferman's results on complex manifolds with boundary. 

We now formulate our main result. 
We refer the reader to Section~\ref{s-g2204301052} 
for the notations used here. 
Let $(M',J)$ be a complex manifold of dimension $n$ with complex structure $J$.
We fix a Hermitian metric $\Theta$ on $M'$ and 
let $g^{TM'}=\Theta(\cdot, J\cdot)$ be the Riemannian metric
on $TM'$ associated to $\Theta$ and let $dv_{M'}$ be its volume form.
We denote by $\langle\,\cdot\,|\,\cdot\,\rangle$ the
pointwise Hermitian product induced by $g^{TM'}$
on the fibers of $\C TM'$ and by duality on 
$\C T^{*}M'$. 

Let $M$ be a relatively compact open subset in $M$ with smooth boundary.
We set $X=\partial{M}$. 
We assume throughout the paper that $M$ is strictly pseudoconvex.
Let $\rho\in\cC^\infty(M',\mathbb R)$ be a defining function of $M$
(cf.\ \eqref{eq:def_funct}), let $\cL_x=\cL_x(\rho)$ 
be the the Levi form associated to $\rho$
at $x\in X$ (cf.\ \eqref{eq:2.12b}) and let $\det(\cL_x)$ 
be the determinant of the Levi form (cf.\ \eqref{eq_detLevi}).
We consider the 1-form $\omega_0=-d\rho\circ J=i(\ddbar\phi-\partial\phi)$
and we fix the contact form
$\omega_0|_{TX}=2i\ddbar\rho|_{TX}=-2i\partial\rho|_{TX}$
on $X$ (cf.\ \eqref{eq:omega_0}-\eqref{eq:2.12b}).

Let $(\,\cdot\,|\,\cdot\,)_M$, $(\,\cdot\,|\,\cdot\,)_{M'}$ be the $L^2$ inner products 
on $\cC^\infty(\ol M)$, $\cC^\infty_c(M')$ induced by the given Hermitian metric 
$\langle\,\cdot\,|\,\cdot\rangle$ respectively (see \eqref{e-gue190312}). 
Let $L^2(M)$ be the completion of $\cC^\infty(\ol M)$ 
with respect to $(\,\cdot\,|\,\cdot\,)_M$. 
Let 
$H^0(\ol{M}):=\set{u\in\cC^\infty(\ol M);\, \ddbar u=0}$,
where $\ddbar:\cC^\infty(M')\To\Omega^{0,1}(M')$ 
denotes the standard Cauchy-Riemann 
operator on $M'$.
Let $H^0_{(2)}(M)$ be the completion of $H^0(\ol M)$ with respect to 
$(\,\cdot\,|\,\cdot,)_M$.

We denote by $\nabla\rho$ the gredient of $\rho$
with respect to the Riemannian metric $g^{TM'}$.
We consider the vector field
$T=\alpha J\big(\nabla\rho\big)+Z$ on $M'$,
where $\alpha\in\cC^\infty(M')$, $a|_X>0$,
and $Z\in\cC^\infty(M',TM')$, $Z|_X\in\cC^\infty(X,HX)$,
%defined in a neighbourhood of $X$, 
cf.\ \eqref{e-gue190312scdqI}-\eqref{e-gue190312scdqII}.
Let $R$ be a formally selfadjoint first order partial differential 
operator on $M'$, 
given near $X$ by
$R=\frac{1}{2}((-iT)+(-iT)^*)$.

Since $M$ is strictly pseudoconvex we have by~\cite{BS75,Fer74} that
the Bergman projection maps the space $\cC^\infty(\ol M)$ of
smooth functions up to the boundary into itself,
$B: \cC^\infty(\ol M)\To\cC^\infty(\ol M)$.
Let 
\[T_R:=BRB: \cC^\infty(\ol M)\To\cC^\infty(\ol M).\] 
We extend $T_R$ to $L^2(M)$: 
\begin{equation}\label{e-gue230528yydq}\begin{split}
&T_R:\Dom(T_R)\subset L^2(M)\To L^2(M),\\
&\Dom(T_R)=\set{u\in L^2(M);\, BRBu\in L^2(M)},
\end{split}\end{equation}
where $BRBu$ is defined in the sense of distributions on $M$. 
In Theorem~\ref{t-gue230525yyd}, we will show that $T_R$ is self-adjoint. 
We consider a function
\begin{equation}\label{e-gue230527yyd0}
\chi\in\cC^\infty_c(\mathbb{R}_+,\mathbb R),
\end{equation}
and set for $k>0$,
\begin{equation}\label{e-gue230527yyd}
\chi_k\in\cC^\infty_c(\mathbb{R}_+,\mathbb R),\quad
\chi_{k}(\lambda):=\chi(k^{-1}\lambda).
\end{equation}
We let
\begin{equation}\label{eq:chi-k}
\quad \chi_k(T_R):L^2(M)\To L^2(M),
\end{equation}
be obtained by functional calculus 
of $T_R$ and let 
$\chi_k(T_R)(\cdot\,,\cdot)\in\mathscr D'(M\times M)$ 
be the distribution kernel of $\chi_k(T_R)$. 
We will show that 
$\chi_k(T_R)(\cdot\,,\cdot)\in\cC^\infty(\ol M\times\ol M)$
(cf.\ Corollary~\ref{t-gue230527ycd}). 
We consider a function $\chi$ with support in $(0,+\infty)$
in order to avoid that the spectral operator $\chi_k(T_R)$
takes into account the zero eigenvalue of $T_R$, whose
eigenspace contains the kernel of $B$. With this choice
the image of $\chi_k(T_R)$ is contained in $H^0_{(2)}(M)$.

The main result of this paper is the following.
%---
\begin{theorem}\label{t-gue230528yydmp}
Let $M$ be a relatively compact strictly pseudoconvex 
domain with smooth boundary $X$ of a
complex manifold $M'$ of dimension $n$. 
Let $T_R:\Dom(T_R)\subset L^2(M)\To L^2(M)$ be the Toeplitz
operator \eqref{e-gue230528yydq} and let $\chi_k(T_R)$
be as in \eqref{eq:chi-k}.
Then the following assertion hold:

(i) For any $\tau, \hat\tau\in\cC^\infty(\ol M)$ with
$\supp\tau\cap\supp\hat\tau=\emptyset$ we have 
\begin{equation}\label{e-gue230527yydzz}
\tau\chi_k(T_R)\hat\tau=O(k^{-\infty})\quad
\text{on $\ol M\times\ol M$}. 
\end{equation}

(ii) %For any $p\in M$, any open neighborhood $U\subset M$ of $p$ and
%$\tau\in\cC^\infty_c(U)$ we have
For any $\tau\in\cC^\infty_c(M)$ we have
\begin{equation}\label{e-gue230528ycdz}
\tau\chi_k(T_R)=O(k^{-\infty})\quad \text{on $\ol M\times\ol M$}.
\end{equation}

(iii) For any $p\in X$ and any open local coordinate patch $U$ around $p$ in $M'$
we have 
\begin{equation}
\label{e-gue230528ycdsz}
\chi_k(T_R)(x,y)=\int_0^{+\infty} 
e^{ikt\Psi(x,y)}b(x,y,t,k)dt+O(k^{-\infty})
\quad\text{on $(U\times U)\cap(\ol M\times\ol M)$},
\end{equation}
where $b(x,y,t,k)\in S^{n+1}_{\mathrm{loc}}
(1;((U\times U)\cap(\ol M\times\ol M))\times{\mathbb R}_+)$,
\begin{equation}
\label{e-gue230528ycdtz}
\begin{split}
&b(x,y,t,k)\sim\sum_{j=0}^\infty b_{j}(x,y,t)k^{n+1-j}~
\mathrm{in}~S^{n+1}_{\mathrm{loc}}(1;((U\times U)
\cap(\ol M\times\ol M))\times{\mathbb R}_+),\\
&b_j(x,y,t)\in\mathscr{C}^\infty(((U\times U)\cap(\ol M\times\ol M))
\times{\mathbb R}_+),~j=0,1,2,\ldots,\\
&b_{0}(x,x,t)=\frac{1}{\pi ^{n}}
\det(\cL_x)\,\chi(t\omega_0(T(x)))t^n\not\equiv 0,\ \ x\in U\cap X,
\end{split}
\end{equation}
and for some compact interval $I\Subset\mathbb R_+$,
\begin{equation}
\begin{split}
\supp_t b(x,y,t,k),\:\:\supp_t b_j(x,y,t)\subset I,\ \ j=0,1,2,\ldots,
\end{split}
\end{equation}
and 
\begin{equation}\label{e-gue230528yydzz}
\begin{split}
&\Psi(z,w)\in\cC^\infty((U\times U)\cap(\ol M\times\ol M)),\ \ {\rm Im\,}\Psi\geq0,\\
&\Psi(z,z)=0, \ z\in U\cap X,\\
&\mbox{${\rm Im\,}\Psi(z,w)>0$ if $(z,w)\notin(U\times U)\cap(X\times X)$},\\
&d_x\Psi(x,x)=-d_y\Psi(x,x)=-2i\partial\rho(x),\ \ x\in U\cap X,\\
&\Psi|_{(U\times U)\cap(X\times X)}=\varphi_-, 
\end{split}
\end{equation}  
where $\varphi_-$ is a phase function as in
\eqref{e-gue140205IV}, cf.\
~\cite[Theorem 4.1]{HM14}.
Moreover, using the local coordinates $z=(x_1,\ldots,x_{2n-1},\rho)$
on $M'$ near $p$, where
$x=(x_1,\ldots,x_{2n-1})$ are local coordinates on $X$ near 
$p$ with $x(p)=0$, we have 
\begin{equation}\label{e-gue230319ycdaIm}
\Psi(z,w)=\Psi(x,y)-i\rho(z)(1+f(z))-
i\rho(w)(1+\overline{f(w)}\,)+O(\abs{(z,w)}^3)\:\:\text{near $(p,p)$},
\end{equation}
where $f$ is smooth near $p$ and $f=O(\abs{z})$. 
\end{theorem}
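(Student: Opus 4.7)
The plan is to construct a semi-classical Fourier integral parametrix $\cH_k$ for $\chi_k(T_R)$ near the boundary, identify it with $\chi_k(T_R)$ modulo $O(k^{-\infty})$, and read off assertions (i)--(ii) from the explicit complex phase carried by $\cH_k$.

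First I would represent the functional calculus via Helffer--Sj\"ostrand, $\chi_k(T_R)=\frac{1}{\pi}\int_{\C}\ddbar\Td{\chi_k}(z)(z-T_R)^{-1}L(dz)$ with $\Td{\chi_k}$ an almost analytic extension of $\chi_k$, reducing the analysis to the resolvent of $T_R$ restricted to the range of $B$ (since $T_R=BRB$). Recalling that Boutet de Monvel--Sj\"ostrand described $B$ as an FIO with complex phase $\Phi$ of positive imaginary part in the interior and boundary trace $\Phi|_{X\times X}=\varphi_-$, with leading amplitude computed by Fefferman, I would make the ansatz
$$\cH_k(x,y)=\int_0^{+\infty}e^{ikt\Psi(x,y)}b(x,y,t,k)\,dt,\qquad b\sim\sum_{j\geq 0}b_j(x,y,t)\,k^{n+1-j},$$
and impose: (a) $\ddbar_x\cH_k\equiv 0$ and $\ddbar_y^{\,*}\cH_k\equiv 0$ modulo $O(k^{-\infty})$, so that $\cH_k$ factors through $H^0_{(2)}(M)$ on both sides; (b) at the principal-symbol level, $T_R\cH_k$ equals multiplication by $kt\omega_0(T(x))$ composed with $\cH_k$, making $\cH_k$ an approximate spectral projection in the rescaled spectrum.

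Condition (a) is an eikonal system. Together with $\mathrm{Im}\,\Psi\geq 0$ and the prescribed boundary trace $\Psi|_{X\times X}=\varphi_-$ from \cite[Theorem 4.1]{HM14}, it determines $\Psi$ modulo negligible terms and yields the normal expansion \eqref{e-gue230319ycdaIm} by a direct computation decomposing the normal direction into $\partial\rho$ and $\ddbar\rho$. Condition (b) then feeds into a transport hierarchy for the $b_j$; the leading equation evaluated on the diagonal over $X$, combined with Fefferman's formula for the leading BdMS amplitude of $B$ and the identification of the principal symbol of $T_R$ on $X$ with the contact pairing $\omega_0(T(x))$, pins down $b_0(x,x,t)=\pi^{-n}\det(\cL_x)\chi(t\omega_0(T(x)))t^n$. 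The higher $b_j$ are solved recursively and inherit compact $t$-support from $\chi$. Identification of $\cH_k$ with $\chi_k(T_R)$ modulo $O(k^{-\infty})$ then follows from $B\cH_k\equiv\cH_k B\equiv\cH_k$ combined with the self-adjointness of $T_R$ (Theorem~\ref{t-gue230525yyd}) and uniqueness of the Borel functional calculus restricted to $H^0_{(2)}(M)$. Finally, (i) is a consequence of $\mathrm{Im}\,\Psi>0$ off $X\times X$ and (ii) is a consequence of the $-i\rho(z)(1+f(z))$ term in \eqref{e-gue230319ycdaIm}, both via repeated integration by parts in $t$.

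The main obstacle I expect is the simultaneous fulfillment of (a) and (b): constructing $\Psi$ off the boundary diagonal with $\mathrm{Im}\,\Psi\geq 0$, the correct boundary trace, and the normal form \eqref{e-gue230319ycdaIm}, while keeping (b) compatible with the holomorphic constraint (a). This amounts to extending the boundary CR phase construction of \cite{HHMS23} and \cite{HM14} into a full neighborhood of $X$ in $M'$, and is where the geometry of the Levi form and of the Reeb-like vector field $R$ enter in an essential way. Once $\Psi$ is in hand, the transport equations and the final identification become a matter of bookkeeping.
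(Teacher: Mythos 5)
Your route is genuinely different from the paper's. The paper never constructs an interior parametrix from scratch: it first establishes the \emph{exact} operator identity $T_R=P\mathcal{T}_{\mathcal{R}}(P^*P)^{-1}P^*$, where $P$ is the Poisson operator and $\mathcal{T}_{\mathcal{R}}=\mathcal{S}\mathcal{R}\mathcal{S}$ is a boundary Toeplitz operator built from the generalized Szeg\H{o} projector $\mathcal{S}$ of Theorem~\ref{t-gue230519yydI}. This identity descends to resolvents (Lemma~\ref{l-gue230520yyd}), and Helffer--Sj\"ostrand then gives the \emph{exact} identity $\chi_k(T_R)=P\chi_k(\mathcal{T}_{\mathcal{R}})(P^*P)^{-1}P^*$ (Lemma~\ref{l-gue230527ycd}). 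At that point the boundary asymptotics of $\chi_k(\mathcal{T}_{\mathcal{R}})$ are simply imported from \cite{HHMS23} via Theorem~\ref{thm:ExpansionMain}, and the interior expansion with phase $\Psi$ is produced by running $P$ and $(P^*P)^{-1}P^*$ through the WKB machinery already worked out in \cite[Part~II, Prop.~7.8, Thm.~7.9]{Hsiao08}. Your plan, by contrast, builds an ansatz $\cH_k$ directly on $\overline{M}\times\overline{M}$ and tries to characterize it by the two conditions (a) ($\ddbar$-constraints) and (b) (principal-symbol eigenvalue equation), then identifies $\cH_k$ with $\chi_k(T_R)$ a posteriori.

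The gap in your proposal is the identification step. Conditions (a)--(b) constrain the \emph{phase} and set up a transport hierarchy, but they do not single out the spectral function $\chi_k(T_R)$ among the many self-adjoint, approximately $\ddbar$-closed FIOs with the right eikonal structure: for instance, replacing $\chi$ by any other cut-off supported in $(0,\infty)$ yields a different operator satisfying exactly the same structural constraints. Appealing to ``uniqueness of the Borel functional calculus restricted to $H^0_{(2)}(M)$'' is a tautology here; it says nothing about why \emph{your constructed} $\cH_k$ equals $\chi_k(T_R)$ mod $O(k^{-\infty})$. To close this you would need either to build a parametrix for the resolvent $(z-T_R)^{-1}$ with estimates uniform down to $\mathrm{Im}\,z\to0$ and then integrate against $\ddbar\widetilde{\chi_k}$ (this is what fixes the $t$-dependence $\chi(t\,\sigma)$ in the amplitude), or to prove a rigidity statement that any approximate spectral projector with the correct principal symbol and trace normalization agrees with the genuine one. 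Neither is sketched, and the trace normalization would itself require the BdMS/Fefferman analysis of $B$ that you are trying to bypass. Separately, determining $b_0$ ``from Fefferman's formula for the leading BdMS amplitude of $B$'' is effectively reimporting the boundary reduction you set out to avoid. The paper's route is cleaner precisely because the exact resolvent identity reduces the whole problem to the compact boundaryless CR manifold $X$ where \cite{HHMS23} already did the hard semiclassical work.
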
 
%---
The representation \eqref{e-gue230528ycdsz}
shows that near the boundary $\chi_k(T_R)$ 
is a semi-classical Fourier integral 
operator with complex phase and
canonical relation generated by the phase $\Psi(x,y)t$. 
The integral in \eqref{e-gue230528ycdsz} 
is a smooth kernel, since $t$ runs in the bounded interval
$I$. The term $O(k^{-\infty})$ denotes a $k$-negligible
smooth kernel (cf.\ \eqref{e-gue13628III}-\eqref{e-gue13628IV}).

The idea of the proof of Theorem \ref{t-gue230528yydmp}
follows the strategy of \cite{BS75,Hsiao08}. We express
the Bergman projection in terms of the Poisson operator
%and a generalized Szeg\H{o} projector $\mathcal{S}$, that is,
and a projector $\mathcal{S}$ on a subspace of functions annihilated by 
a system of pseudo-differential operators simulating $\ddbar_b$. 
We can express in the same way a Toeplitz operator $T_R$
in terms of a Toeplitz operator on the boundary $X$,
given by 
$\mathcal{T}_{\mathcal{R}}=\mathcal{S}\mathcal{R}\mathcal{S}$.
The operator
$\mathcal{S}$ is a Fourier integral operator having a structure
similar to the Szeg\H{o} projector 
cf.\ \cite{BG81,BS75,HHMS23,Hsiao08}
and we can apply the results obtained in \cite{HHMS23}
for the asymptotics of $\chi_k(\mathcal{T}_{\mathcal{R}})$.

As a consequence we have the following asymptotics of
the kernel of $\chi_k(T_R)$ on the diagonal.
%---
\begin{corollary}\label{cor:trace}
In the situation of Theorem \ref{t-gue230528yydmp}
we have:
\begin{equation}\label{eq:tr1}
\chi_k(T_R)(z,z)=O(k^{-\infty}),\quad\text{as $k\to\infty$
on $M$.}
\end{equation}
\begin{equation}\label{eq:1.8}
\chi_k(T_R)(x,x)=\sum_{j=0}^{\infty} b_j(x)k^{n+1-j}~
\text{in $S^{n+1}_{\rm loc}(1;X)$ on $X$},
\end{equation}
where for $x\in X$ and with $b_j(x,x,t)$ as in \eqref{e-gue230528ycdtz},
\begin{equation}\label{eq:1.8a}
b_j(x)=\int_0^{+\infty}b_j(x,x,t)dt,\quad j\in\N_0,
\end{equation}
with
\begin{equation}\label{eq:1.8b}
b_0(x)=\frac{1}{\pi ^{n}}\det(\cL_x)\int_0^{+\infty}
\chi(t\omega_0(T(x)))t^ndt,
\end{equation}
Moreover, there exist $C_1,C_2>0$ 
such that for $k$ large enough 
$C_1k^n\leq\operatorname{Tr}\chi_k(T_R)\leq C_2k^n$.
\end{corollary}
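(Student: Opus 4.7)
The plan is to deduce each part of the corollary from the parametrix representation of Theorem~\ref{t-gue230528yydmp}. Parts~\eqref{eq:tr1} and~\eqref{eq:1.8}--\eqref{eq:1.8b} follow almost immediately, while the trace bound is the substantive step.

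For~\eqref{eq:tr1}, given any compact $K\Subset M$ I would pick $\tau\in\cC^\infty_c(M)$ with $\tau\equiv 1$ near~$K$; part~(ii) of the theorem then gives $\tau\chi_k(T_R)=O(k^{-\infty})$ on $\ol M\times\ol M$, whose restriction to the diagonal yields the claim. For~\eqref{eq:1.8}, cover $X$ by coordinate patches $U$ and set $y=x\in U\cap X$ in~\eqref{e-gue230528ycdsz}; since $\Psi(x,x)=0$ the exponential factor is~$1$, and because $\supp_t b\subset I\Subset\mathbb{R}_+$ the symbol expansion~\eqref{e-gue230528ycdtz} integrates term by term to give
\[
\chi_k(T_R)(x,x)=\int_0^{+\infty}b(x,x,t,k)\,dt+O(k^{-\infty})\sim\sum_{j=0}^{\infty}k^{n+1-j}\int_0^{+\infty}b_j(x,x,t)\,dt,
\]
with the explicit form of $b_0(x,x,t)$ producing~\eqref{eq:1.8b}.

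For the trace estimate, write $\operatorname{Tr}\chi_k(T_R)=\int_M\chi_k(T_R)(z,z)\,dv_M(z)$; by~\eqref{eq:tr1} only a tubular neighborhood of $X$ contributes modulo $O(k^{-\infty})$, so I localize via a finite partition of unity to small patches with tubular coordinates $(x,\rho)$, $x\in X$, $-\epsilon<\rho\leq 0$. From~\eqref{e-gue230319ycdaIm},
\[
\Psi(z,z)=-2i\rho\bigl(1+\mathrm{Re}\,f(z)\bigr)+O(|z|^3),
\]
so $\mathrm{Im}\bigl(kt\Psi(z,z)\bigr)\geq c\,kt|\rho|$ for $z$ close to the boundary point and $t$ in the compact $t$-support of~$b$, with some $c>0$. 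Rescaling $\sigma=-2kt\rho$ converts the $\rho$-integral into a Laplace-type integral whose leading term reads
\[
\int_{-\epsilon}^{0}e^{ikt\Psi(z,z)}b(z,z,t,k)\,d\rho=\frac{1}{2kt}\,b(x,x,t,k)+\text{lower order in }k,
\]
so the $k^{-1}$ prefactor reduces the top order from $k^{n+1}$ on the boundary to $k^n$ in the trace. Inserting the explicit $b_0$ and changing variables $u=t\omega_0(T(x))$ yields
\[
\operatorname{Tr}\chi_k(T_R)=\frac{k^n}{2\pi^n}\int_X\frac{\det(\cL_x)}{\omega_0(T(x))^{n}}\,dv_X(x)\cdot\int_0^{+\infty}\chi(u)u^{n-1}\,du+O(k^{n-1}),
\]
in which $\det(\cL_x)>0$ by strict pseudoconvexity and $\omega_0(T)>0$ on $X$ by the construction of~$T$; the bounds $C_1k^n\leq\operatorname{Tr}\chi_k(T_R)\leq C_2k^n$ follow once $\chi\geq 0$ has nontrivial support. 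The main obstacle is verifying the Laplace step uniformly in $t\in I$ and along the boundary, which requires controlling the cubic remainder in~\eqref{e-gue230319ycdaIm}, the $\rho$-dependence of~$b$, and the metric Jacobian, but needs no new conceptual input beyond routine stationary-phase bookkeeping.
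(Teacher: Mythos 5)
Your argument follows the paper's proof essentially step by step: \eqref{eq:tr1} from part (ii) of Theorem~\ref{t-gue230528yydmp}, \eqref{eq:1.8}--\eqref{eq:1.8b} from setting $y=x$ in the parametrix and using $\Psi(x,x)=0$, and the trace bound by splitting the integral into a compact interior part (negligible) and a boundary collar, then doing the Laplace integral in the normal variable $\rho$ via \eqref{e-gue230319ycdaIm2} and $2k\int_\varepsilon^0 e^{2k\rho}\,d\rho\to 1$. You go slightly further than the paper in extracting the explicit leading constant $\frac{k^n}{2\pi^n}\int_X\frac{\det\cL_x}{\omega_0(T(x))^n}dv_X\cdot\int_0^\infty\chi(u)u^{n-1}du$ (which agrees, as it must since $T_R$ is conjugate to $\mathcal{T}_{\mathcal{R}}$, with \eqref{eq:1.9a}), and you correctly flag the implicit positivity requirement on $\chi$ needed for the two-sided bound.
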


\addtocontents{toc}{\protect\setcounter{tocdepth}{0}}
\section*{Acknowledgement}
Chin-Yu Hsiao would like to thank the Department
of Mathematics and Computer Science of the 
University of Cologne for the hospitality during his visits in
September-November 2022 and May 2023. 
\addtocontents{toc}{\protect\setcounter{tocdepth}{2}}
\section{Preliminaries}\label{s-g2204301052} 
\subsection{Notions from microlocal and semi-classical analysis} \label{s-ssna} 

We shall use the following notations: 
$\mathbb N=\set{1,2,\ldots}$, $\mathbb N_0=\mathbb N\cup\set{0}$, $\mathbb R$ 
is the set of real numbers, $\mathbb R_+:=\set{x\in\mathbb R;\, x>0}$. 

Let $W$ be a smooth paracompact manifold.
We let $TW$ and $T^*W$ denote the tangent bundle of $W$
and the cotangent bundle of $W$ respectively.
The complexified tangent bundle of $W$ and the complexified 
cotangent bundle of $W$ are be denoted by $\C TW$
and $\C T^*W$, respectively. Write $\langle\,\cdot\,,\cdot\,\rangle$ to denote the pointwise
duality between $TW$ and $T^*W$.
We extend $\langle\,\cdot\,,\cdot\,\rangle$ bilinearly to $\C TW\times\C T^*W$.
Let $G$ be a smooth vector bundle over $W$. The fiber of $G$ at $x\in W$ will be denoted by $G_x$.
Let $Y\subset W$ be an open set. 
From now on, the spaces of distributions of $Y$ and
smooth functions of $Y$ will be denoted by $\mathscr D'(Y)$ and $\cC^\infty(Y)$ respectively.
Let $\mathscr E'(Y)$ be the subspace of $\mathscr D'(Y)$ whose elements have compact support in $Y$ and 
let $\cC^\infty_c(Y)$ be the subspace of $\cC^\infty(Y)$ whose elements have compact support in $Y$. For $m\in\mathbb R$, let $H^m(Y)$ denote the Sobolev space
of order $m$ of $Y$. Put
\begin{gather*}
H^m_{\rm loc\,}(Y)=\big\{u\in\mathscr D'(Y);\, \varphi u\in H^m(Y),
      \, \mbox{for every $\varphi\in \cC^\infty_c(Y)$}\big\}\,,\\
      H^m_{\rm comp\,}(Y)=H^m_{\rm loc}(Y)\cap\mathscr E'(Y)\,.
\end{gather*}

If
$A: \cC^\infty_c(W)\To \mathscr D'(W)$
is continuous, we write $A(x, y)$ to denote the distribution kernel of $A$.
The following two statements are equivalent
\begin{enumerate}
\item $A$ is continuous: $\mathscr E'(W)\To \cC^\infty(W)$,
\item $A(x,y)\in \cC^\infty(W\times W)$.
\end{enumerate}
If $A$ satisfies (1) or (2), we say that $A$ is smoothing on $W$. Let
$A,B: \cC^\infty_c(W)\to \mathscr D'(W)$ be continuous operators.
We write 
\begin{equation} \label{e-gue160507f}
\mbox{$A\equiv B$ (on $W\times W$)} 
\end{equation}
if $A-B$ is a smoothing operator on $W$. 

%We say that $A$ is properly supported if the restrictions of the two projections 
%$(x,y)\To x$, $(x,y)\To y$ to $\supp A(x,y)$
%are proper.

Let $H(x,y)\in\mathscr D'(W\times W)$. We write $H$ to denote the unique 
continuous operator $\cC^\infty_c(W)\To\mathscr D'(W)$ with distribution kernel $H(x,y)$. 
In this work, we identify $H$ with $H(x,y)$. 

Let $D$ be an open set of a smooth manifold $X$. 
For $0\leq\rho, \delta\leq1$, $m\in\mathbb R$, let
$$L^m_{\rho,\delta}(D)\,,\:\:
\:\:L^m_{{\rm cl\,}}(D),$$
denote the space of pseudodifferential operators on $D$ of order 
$m$ type $(\rho,\delta)$
and the 
space of classical pseudodifferential operators on $D$ of order $m$ 
 respectively. Let $W\subset\mathbb R^N$ be an open set.
For $m\in\mathbb R$, $0\leq\rho, \delta\leq1$, let 
$S^{m}_{\rho,\delta}(W\times\mathbb R^{N_1})$ be the 
H\"ormander symbol space on $W\times\mathbb R^{N_1}$ of order $m$ and 
type $(\rho,\delta)$. Let
$S^{m}_{{\rm cl\,}}(W\times\mathbb R^{N_1})$ be the 
classical symbol space on $W\times\mathbb R^{N_1}$ of order $m$.

Let $W_1$ be an open set in $\mathbb R^{N_1}$ and 
let $W_2$ be an open set in $\mathbb R^{N_2}$. 
A $k$-dependent continuous operator
$F_k:\cC^\infty_c(W_2)\To\mathscr D'(W_1)$
is called $k$-negligible on $W_1\times W_2$
if, for $k$ large enough, $F_k$
is smoothing and, for any $K\Subset W_1\times W_2$, any
multi-indices $\alpha$, $\beta$ and any $N\in\mathbb N$, 
there exists $C_{K,\alpha,\beta,N}>0$
such that
\begin{equation}\label{e-gue13628III}
\abs{\pr^\alpha_x\pr^\beta_yF_k(x, y)}\leq 
C_{K,\alpha,\beta,N}k^{-N}\:\: \text{on $K$},\ \ \text{ for } k\gg1.
\end{equation}
In that case we write
\begin{equation}\label{e-gue13628IV}
F_k(x,y)=O(k^{-\infty})\:\:\text{or}\:\: F_k=O(k^{-\infty})
\:\:\text{on $W_1\times W_2$}.
\end{equation}
If $F_k, G_k:\cC^\infty_c(W_2)\To\mathscr D'(W_1)$
are $k$-dependent continuous operators, 
we write $F_k= G_k+O(k^{-\infty})$ on $W_1\times W_2$ 
or $F_k(x,y)=G_k(x,y)+O(k^{-\infty})$ on $W_1\times W_2$ if 
$F_k-G_k=O(k^{-\infty})$ on $W_1\times W_2$. 

Let $\Omega_1$ and $\Omega_2$ be smooth manifolds. Let $F_k, G_k: \cC^\infty(\Omega_2)\To\cC^\infty(\Omega_1)$
be $k$-dependent smoothing operators. 
We write $F_k=G_k+O(k^{-\infty})$ on $\Omega_1\times\Omega_2$ 
if on every local coordinate patch $D$ of $\Omega_1$ and local coordinate patch 
$D_1$ of $\Omega_2$, $F_k=G_k+O(k^{-\infty})$ on $D\times D_1$.

We recall the definition of the semi-classical symbol spaces.

\begin{definition} \label{d-gue140826}
Let $W$ be an open set in $\mathbb R^N$. Let
\begin{equation*}
\begin{split}
&S(1;W):=\Big\{a\in\cC^\infty(W);\, \mbox{for every $\alpha\in\mathbb N^N_0$}:
\sup_{x\in W}\abs{\pr^\alpha a(x)}<\infty\Big\},\\
S^0_{{\rm loc\,}}(1;W):=&
\Big\{(a(\cdot,k))_{k\in\mathbb R};\, \mbox{for all $\alpha\in\mathbb N^N_0$,
$\chi\in\cC^\infty_c(W)$}\,:\:\sup_{k\geq1}\sup_{x\in W}
\abs{\pr^\alpha(\chi a(x,k))}<\infty\Big\}\,.
\end{split}
\end{equation*}
For $m\in\mathbb R$, let
\[
S^m_{{\rm loc}}(1):=S^m_{{\rm loc}}(1;W)
=\Big\{(a(\cdot,k))_{k\in\mathbb R};\, (k^{-m}a(\cdot,k))
\in S^0_{{\rm loc\,}}(1;W)\Big\}\,.
\]
Hence $a(\cdot,k)\in S^m_{{\rm loc}}(1;W)$ 
if for every $\alpha\in\mathbb N^N_0$ and $\chi\in\cC^\infty_c(W)$, there
exists $C_\alpha>0$ independent of $k$, 
such that $\abs{\pr^\alpha (\chi a(\cdot,k))}\leq C_\alpha k^{m}$ 
holds on $W$.

Consider a sequence $a_j\in S^{m_j}_{{\rm loc\,}}(1)$, 
$j\in\N_0$, where $m_j\searrow-\infty$,
and let $a\in S^{m_0}_{{\rm loc\,}}(1)$. We say that
\[
a(\cdot,k)\sim
\sum\limits^\infty_{j=0}a_j(\cdot,k)\:\:
\text{in $S^{k_0}_{{\rm loc\,}}(1)$},
\]
if for every
$\ell\in\N_0$, we have $a-\sum^{\ell}_{j=0}a_j\in 
S^{m_{\ell+1}}_{{\rm loc\,}}(1)$ .
For a given sequence $a_j$ as above, we can always 
find such an asymptotic sum
$a$, which is unique up to an element in
$S^{-\infty}_{{\rm loc\,}}(1)=S^{-\infty}_{{\rm loc\,}}(1;W)
:=\cap _mS^m_{{\rm loc\,}}(1)$.

Similarly, we can define $S^m_{{\rm loc\,}}(1;Y)$ 
in the standard way, where $Y$ is a smooth manifold.
\end{definition}

\subsection{Set up of complex manifolds with smooth boundary} \label{s-su}
Let $(M',J)$ be a complex manifold of dimension $n$, where
$J: TM'\To TM'$ is the complex structure of $M'$.
We fix a Hermitian metric $\Theta$ on $M'$ and 
let $g^{TM'}=\Theta(\cdot, J\cdot)$ be the Riemannian metric
on $TM'$ associated to $\Theta$ and let $dv_{M'}$ be its volume form.
We denote by $\langle\,\cdot\,|\,\cdot\,\rangle$ the
pointwise Hermitian product induced by $g^{TM'}$
on the fibers of the bundle $\Lambda^q(T^{*(0,1)}M')$
of $(0,q)$-forms for every $q\in\{0,\ldots,n\}$.
Let $\Omega^{0,q}(M')$ be the space of smooth 
$(0,q)$-forms on $M'$ and let $\Omega^{0,q}_c(M')$ 
be the subspace of $\Omega^{0,q}(M')$ 
whose elements have compact support in $M'$.
The $L^2$ inner product on $\Omega^{0,q}_c(M')$ is given by 
\begin{equation}
(\,\alpha\,|\,\beta\,)_{M'}=\int_{M'}
\langle\,\alpha\,|\,\beta\,\rangle dv_{M'}.
\end{equation}
The corresponding $L^2$ space is denoted by $L^2_{0,q}(M')$,
and we set $L^2(M')=L^2_{0,0}(M')$.

Let $M$ be a relatively compact open subset of $M'$
with smooth boundary. Hence $X:=\partial M$ is a submanifold
of $M'$ of real dimension $2n-1$. 
We denote by $HX=TX\cap J(TX)$ the complex tangent bundle
of $X$. The triple $(X,HX,J)$ forms a CR structure on $X$
and we set $T^{1,0}X:=T^{1,0}M'\cap\C TX$,
$T^{0,1}X:=\ol{T^{1,0}X}$. 
Let $\rho\in\cC^\infty(M',\mathbb R)$ be a defining function of $X$, 
that is, 
\begin{equation}\label{eq:def_funct}
M=\{x\in M':\rho(x)<0\},\quad X=\partial M=\{x\in M':\rho(x)=0\}, \quad
\text{and $d\rho\neq0$ on $X$.}
\end{equation}
From now on, we fix a defining function $\rho$ so that $|d\rho|=1$ on $X$. 
Define a real $1$-form $\omega_0$ on $M'$ by
\begin{equation}\label{eq:omega_0}
\omega_0=-d\rho\circ J.
\end{equation}
Hence
\begin{equation}\label{eq:domega_0}
\omega_0=i(\ddbar\rho-\partial\rho),\quad
d\omega_0=2i\partial\ddbar\rho.
\end{equation}
The Levi form of $\rho$ is Hermitian symmetric map $
\cL_x=\cL_x(\rho)$ given by
\begin{equation}\label{eq:2.12b}
\cL_x:T^{1,0}_xX\times T^{1,0}_xX\to\C,
\quad \cL_x(U,\ol V)=\frac{1}{2i} d\omega_0(U, \ol V)
=\partial\ddbar\rho(U, \ol V),
\quad U, V\in T^{1,0}_xX.
\end{equation}

We assume that $M$ is a strictly pseudoconvex domain, that is,
the Levi form $\cL_x$ is positive definite for every $x\in X$.
In this case the hyperplane field $HX$ is a contact
structure on $X$. Indeed, $HX=\ker(\omega_0|_{TX})$
and for every $u\in HX\setminus\{0\}$ we have 
$d\omega_0(u,Ju)=4\cL(U,\overline{U})>0$,
where $U=\frac12(u-iJu)\in T^{1,0}X$.
So $d\omega_0|_{HX}$ is symplectic, and hence $HX$ 
is a contact structure, with 
$\omega_0|_{TX}=2i\ddbar\rho|_{TX}=-2i\partial\rho|_{TX}$ 
a contact form.

We denote by $\lambda_j(x)$, $j=1,\ldots,n-1$, the eigenvalues 
of $\cL_x$ with respect to $\langle\,\cdot\,|\,\cdot\,\rangle$
(note that $T^{0,1}X$ has rank $n-1$).
The determinant of the Levi form is defined by
%---
\begin{equation}\label{eq_detLevi}
\det(\cL)_x:=\lambda_1(x)\ldots\lambda_{n-1}(x).
\end{equation}
%---

Let $\nabla\rho$ be the gradient
of $\rho$ with respect to the Riemannian metric $g^{TM'}$.
We define the vector field $T$ on $M'$ by
\begin{equation}\label{e-gue190312scdqI}
T=\alpha J\big(\nabla\rho\big)+Z\in\cC^\infty(M',TM'),
\end{equation}
where 
\begin{equation}\label{e-gue190312scdqII}
\alpha\in\cC^\infty(M'),\:\: \alpha|_X>0,\:\: 
Z\in\cC^\infty(M',TM'),\:\: Z|_X\in\cC^\infty(X,HX).
\end{equation}
The vector field $T$ does not vanish on a neighborhood of $X$.
Indeed, we have on $X$ that $\langle J(\nabla\rho),Z\rangle=
-\langle\nabla\rho,JZ\rangle=0$  hence
$|T|^2=a^2+|Z|^2>0$. 
Note also that 
\begin{equation}\label{eq:omegaT}
\omega_0(T)=-(d\rho\circ J)(\alpha J(\nabla\rho)+Z)=
\alpha d\rho(\nabla\rho)=\alpha|\nabla\rho|^2=\alpha\quad\text{on $X$},
\end{equation}
since $|\nabla\rho|=|d\rho|=1$ on $X$.
 
Let $U$ be an open set in $M'$. Let 
\[
\begin{split}
&\cC^\infty(U\cap \ol M),\ \ \mathscr D'(U\cap \ol M),\ \ \cC^\infty_c(U\cap \ol M),\ \ 
\mathscr E'(U\cap \ol M),\\ 
&H^s(U\cap \ol M),\ \ H^s_{{\rm comp\,}}(U\cap \ol M),\ \ 
H^s_{{\rm loc\,}}(U\cap \ol M),
\end{split}
\]
(where $\ s\in\mathbb R$)
denote the spaces of restrictions to $U\cap\ol M$ of elements in 
\[
\begin{split}
&\cC^\infty(U\cap M'),\ \ \mathscr D'(U\cap M'),\ \ \cC^\infty(U\cap M'),\ \ 
\mathscr E'(U\cap M'),\\  
&H^s(M'),\ \  H^s_{{\rm comp\,}}(M'),\ \  
H^s_{{\rm loc\,}}(M'),
\end{split}
\] 
respectively. Write 
\[
\begin{split}
L^2(U\cap\ol M):=&H^0(U\cap \ol M),\ \ 
L^2_{{\rm comp\,}}(U\cap\ol M):=H^0_{{\rm comp\,}}(U\cap \ol M),\\ 
&L^2_{{\rm loc\,}}(U\cap\ol M):=H^0_{{\rm loc\,}}(U\cap \ol M).
\end{split}
\] 
Let $dv_{M'}$ be the volume form on $M'$ induced 
by the Hermitian metric $\langle\,\cdot\,|\,\cdot\,\rangle$
on $\C TM'$ and 
and let $(\,\cdot\,|\,\cdot\,)_M$ and $(\,\cdot\,|\,\cdot\,)_{M'}$ 
be the inner products on $\cC^\infty(\ol M)$ and $\cC^\infty_c(M')$
defined by
\begin{equation} \label{e-gue190312}
\begin{split}
&(\,f\,|\,h\,)_M=\int_Mf\ol hdv_{M'},\ \ 
f, h\in\cC^\infty(\ol M),\\
&(\,f\,|\,h\,)_{M'}=\int_{M'}f\ol hdv_{M'},\ \ 
f, h\in\cC^\infty_c(M').
\end{split}
\end{equation}
Let $\norm{\cdot}_M$ and $\norm{\cdot}_{M'}$ be the 
corresponding norms with respect to $(\,\cdot\,|\,\cdot\,)_M$
and $(\,\cdot\,|\,\cdot\,)_{M'}$ respectively. 
Let $L^2(M)$ be the completion of $\cC^\infty(\ol M)$ 
with respect to $(\,\cdot\,|\,\cdot\,)_M$. 
We extend $(\,\cdot\,|\,\cdot\,)_M$ to $L^2(M)$
in the standard way. For $q=1,2,\ldots,n$, 
let $\Omega^{0,q}(M')$ be the space of smooth $(0,q)$ forms on $M'$ and 
let $\Omega^{0,q}_c(M')$ be the subspace of 
$\Omega^{0,q}(M'
)$ whose elements have compact support in $M'$. 
As in \eqref{e-gue190312}, let $(\,\cdot\,|\,\cdot\,)_{M'}$ 
be the $L^2$ inner product on $\Omega^{0,q}_c(M')$ 
induced by $dv_{M'}$ and $\langle\,\cdot\,|\,\cdot\,\rangle$.

The boundary $X=\partial{M}$ is a compact CR manifold 
of dimension $2n-1$ with natural CR structure 
$T^{1,0}X:=T^{1,0}M'\cap\C TX$. Let $T^{0,1}X:=\ol{T^{1,0}X}$. 
The Hermitian metric on $\C TM'$ induces a Hermitian metric
$\langle\,\cdot\,|\,\cdot\,\rangle$ on $\C TX$ and let $(\,\cdot\,|\,\cdot\,)_X$ 
be the $L^2$ inner product on 
$\cC^\infty(X)$ 
induced by $\langle\,\cdot\,|\,\cdot\,\rangle$.

Let $U$ be an open set in $M'$. Let 
$$F_1, F_2: \cC^\infty_c(U\cap M)\To\mathscr D'(U\cap M)$$ 
be continuous operators. Let 
$F_1(x,y), F_2(x,y)\in\mathscr D'((U\times U)\cap(M\times M))$ 
be the distribution kernels of $F_1$ and $F_2$ respectively. 
We write 
$$F_1\equiv F_2\!\!\mod\cC^\infty((U\times U)\cap(\ol M\times\ol M))$$ 
or $F_1(x,y)\equiv F_2(x,y)\!\!\mod\cC^\infty((U\times U)\cap(\ol M\times\ol M))$ 
if $F_1(x,y)=F_2(x,y)+r(x,y)$, where 
$r(x,y)\in\cC^\infty((U\times U)\cap(\ol M\times\ol M))$. 

Let $F_k, G_k: \cC^\infty_c(U\cap M)\To\mathscr D'(U\cap M)$ 
be $k$-dependent continuous operators. 
Let $F_k(x,y), G_k(x,y)\in\mathscr D'((U\times U)\cap(M\times M))$ 
be the distribution kernels of $F_k$ and $G_k$ respectively. 
We write 
\begin{equation}\label{e-gue190813yyd}
F_k(x,y)\equiv G_k(x,y)\!\!\mod O(k^{-\infty})\ \ \mbox{ on $(U\times U)\cap(\ol M\times\ol M)$} 
\end{equation}
or $F_k\equiv G_k\mod O(k^{-\infty})$ on $(U\times U)\cap(\ol M\times\ol M)$ 
if there is a $r_k(x,y)\in \cC^\infty(U\times U)$ with $r_k(x,y)=O(k^{-\infty})$ 
on $U\times U$ such that  \[r_k(x,y)|_{(U\times U)\cap(\ol M\times\ol M)}=F_k(x,y)-G_k(x,y),\:
\text{for $k\gg1$.}\]  
Let $m\in\mathbb R$. Let $U$ be an open set in $M'$. Let 
\begin{equation}\label{e-gue190813yydI}
S^m_{{\rm loc}}(1,(U\times U)\cap(\ol M\times\ol M))
\end{equation}
denote the space of restrictions to $(U\times U)\cap(\ol M\times\ol M)$ of elements in 
$S^m_{{\rm loc}}(1,U\times U)$.
Let
\[a_j\in S^{m_j}_{{\rm loc}}(1,(U\times U)\cap(\ol M\times\ol M)),\ \ j=0,1,2,\dots,\] 
with $m_j\searrow -\infty$, $j\To \infty$.
Then there exists
$a\in S^{m_0}_{{\rm loc}}(1,(U\times U)\cap(\ol M\times\ol M))$
such that for every $\ell\in\N$,
\[a-\sum^{\ell-1}_{j=0}a_j\in S^{m_\ell}_{{\rm loc}}(1,(U\times U)\cap
(\ol M\times\ol M)).\]
If $a$ and $a_j$ have the properties above, we write
\[a\sim\sum^\infty_{j=0}a_j \text{ in }
S^{m_0}_{{\rm loc}}(1,(U\times U)\cap(\ol M\times\ol M)).\]

\section{The Toeplitz operator $T_R$}\label{s-gue230523yyd}
Let $R$ be a first order partial differential operator on $M'$ 
such that $R$ is formally self-adjoint with respect to $(\,\cdot\,|\,\cdot\,)_{M'}$ and near $X$,
\begin{equation}
R=\frac{1}{2}((-iT)+(-iT)^*),
\end{equation} 
where $T$ is given by \eqref{e-gue190312scdqI} and $(-iT)^*$ is the formal adjoint of 
$-iT$ with respect to $(\,\cdot\,|\,\cdot\,)_{M'}$. 
Let $T_R$ the Toeplitz operator introduced in \eqref{e-gue230528yydq}.
The goal of this section is to prove the following.
\begin{theorem}\label{t-gue230525yyd}
The operator $T_R:\Dom(T_R)\subset L^2(M)\To L^2(M)$ is 
self-adjoint.
\end{theorem}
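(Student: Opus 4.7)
The plan is to establish $T_R=T_R^*$ in three steps: formal symmetry of $BRB$ on $\cC^\infty(\ol M)$, identification of the maximal-domain operator $T_R$ as the adjoint of its smooth-function restriction, and verification of essential self-adjointness via von Neumann's deficiency-index criterion.

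First I would verify that $T$ is tangent to $X$. On $X$, $Z\in HX\subset TX$ by assumption, while $J(\nabla\rho)\in TX$ because $g^{TM'}(J(\nabla\rho),\nabla\rho)=0$---the standard antisymmetry $g(JU,U)=0$ for the $J$-invariant Hermitian metric $g^{TM'}=\Theta(\cdot,J\cdot)$. Consequently, for $f,g\in\cC^\infty(\ol M)$, integration by parts on $M$ together with the formal self-adjointness of $R$ on $M'$ yield the boundary identity
\[
(Rf\,|\,g)_M-(f\,|\,Rg)_M=-i\int_X\langle T,\nu\rangle\,f\overline{g}\,dS=0,
\]
with $\nu=\nabla\rho$ the outward conormal and the integral vanishing by tangentiality. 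Since Boutet de Monvel--Sj\"ostrand gives $B:\cC^\infty(\ol M)\to\cC^\infty(\ol M)$ and $B=B^*$, this furnishes
\[
(T_R u\,|\,v)_M=(RBu\,|\,Bv)_M=(Bu\,|\,RBv)_M=(u\,|\,T_R v)_M,\quad u,v\in\cC^\infty(\ol M),
\]
so $T_R^{\min}:=T_R|_{\cC^\infty(\ol M)}$ is symmetric.

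Next, a standard duality argument identifies $(T_R^{\min})^*$ with the operator $T_R$ of \eqref{e-gue230528yydq}: $v\in\Dom((T_R^{\min})^*)$ iff $u\mapsto(T_R^{\min}u\,|\,v)_M$ is $L^2$-bounded on $\cC^\infty(\ol M)$, which by the formal computation above is equivalent to $BRBv$ (distributionally defined) lying in $L^2(M)$. So $T_R=(T_R^{\min})^*$ is automatically closed, and the self-adjointness $T_R=T_R^*$ reduces to essential self-adjointness of $T_R^{\min}$, equivalently $\ker(T_R\mp i)=\{0\}$. If $T_Rv=\pm iv$, then $v=\mp iT_Rv=\mp iBRBv\in\mathrm{Im}\,B=H^0_{(2)}(M)$, so $Bv=v$ and $BRv=\pm iv$; pairing against $v$,
\[
\pm i\|v\|_M^2=(BRv\,|\,v)_M=(Rv\,|\,v)_M,
\]
which forces $v=0$ provided the right-hand side is real.

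The main obstacle is this reality step: elements of $H^0_{(2)}(M)$ need not extend smoothly to $\ol M$, so the symmetry identity of the first paragraph does not immediately apply to $v$. To close the argument I would invoke the Boutet de Monvel--Sj\"ostrand representation of $B$ via a Poisson operator and a Szeg\H{o}-type projector $\mathcal{S}$ on the compact boundary $X$---the reduction previewed in the introduction. Conjugation intertwines $T_R$ with the boundary Toeplitz operator $\mathcal{T}_{\mathcal{R}}=\mathcal{S}\mathcal{R}\mathcal{S}$ on $L^2(X)$, for which no boundary-regularity obstacle exists: $\mathcal{S}$ preserves $\cC^\infty(X)$, the analogous symmetry computation (with no boundary integral) runs on the dense subspace $\cC^\infty(X)$, and essential self-adjointness of $\mathcal{T}_{\mathcal{R}}$ then transfers back to $T_R$.
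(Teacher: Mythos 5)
Your tangentiality check for $T$ and the formal symmetry of $T_R$ on $\cC^\infty(\ol M)$ are correct, and the idea of passing to the boundary Toeplitz operator $\mathcal{T}_{\mathcal{R}}$ via the Poisson operator is exactly the right mechanism underlying the paper's argument. However, the proposal as written leaves a genuine gap precisely at the point you flag as ``the main obstacle,'' and the sketch offered to close it does not actually close it.

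The issue is that the deficiency-index route requires $(Rv\,|\,v)_M$ to be real for a possibly non-smooth $v\in\Dom(T_R)$, and your proposed fix---transferring essential self-adjointness from $\mathcal{T}_{\mathcal{R}}$ back to $T_R$---pushes the same problem one level down without resolving it. First, the intertwining $T_R P=P\mathcal{T}_{\mathcal{R}}$ does give a genuine unitary equivalence between $T_R|_{H^0_{(2)}(M)}$ and $\mathcal{T}_{\mathcal{R}}|_{\operatorname{Im}\mathcal{S}}$ (since $P$ is an isometry for $[\cdot\,|\,\cdot]_X$ onto the space of $\Box_f$-harmonic functions, mapping $\operatorname{Im}\mathcal{S}$ bijectively onto $H^0_{(2)}(M)$), so the transfer \emph{can} be justified; but you do not do so, and more importantly you do not establish essential self-adjointness of $\mathcal{T}_{\mathcal{R}}$ either. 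Saying that ``the analogous symmetry computation runs on the dense subspace $\cC^\infty(X)$'' only gives symmetry of the minimal operator on $X$---the same gap recurs, because maximal-domain elements of $\Dom(\mathcal{T}_{\mathcal{R}})$ need not be smooth. What is needed is a regularity statement, and this is precisely what the paper supplies in Lemma~\ref{l-gue230519yyd}: using $u=P\mathcal{S}(P^*P)^{-1}P^*u$ and the fact that $\mathcal{R}=(P^*P)^{-1}P^*RP$ is a classical first-order $\Psi$DO elliptic on the characteristic ray, one shows $(P^*P)^{-1}P^*u\in H^{1/2}(X)$ for $u\in\Dom(T_R)$, hence $Bu\in H^1(\ol M)$ and admits a smooth approximating sequence for which both $u_j$ and $BRBu_j$ converge. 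With that lemma in hand the paper avoids deficiency indices altogether: it extends the symmetry identity by density, and obtains the reverse inclusion $\Dom(T_R^*)\subset\Dom(T_R)$ from the boundedness of $v\mapsto(u\,|\,BRBv)_M$ on $\cC^\infty(\ol M)$. Your plan is compatible with this, but until you prove the $H^{1/2}$-regularity of boundary traces of $\Dom(T_R)$ (or an equivalent statement), the argument is not complete.

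As a secondary remark, your identification $(T_R^{\min})^*=T_R$ also presupposes that the distributional pairing $(BRBu\,|\,v)_M=(u\,|\,BRBv)_M$ makes sense for smooth $u$ and $L^2$ $v$; since $B$ is nonlocal, this is not the standard integration-by-parts duality for differential operators and deserves a word of justification---the paper again leans on Lemma~\ref{l-gue230519yyd} and the factorization of $B$ through $P$ for this.
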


For the proof of the theorem~\ref{t-gue230525yyd} we need some preparation.
Let 
\[\ddbar^*_f: \Omega^{0,1}(M')\To\cC^\infty(M')\]
be the formal adjoint of $\ddbar$ with respect to $(\,\cdot\,|\,\cdot\,)_{M'}$, that is,
$(\,\ddbar f\,|\,h\,)_{M'}=(\,f\,|\,\ddbar^*_fh\,)_{M'}$, for any
$f\in\cC^\infty_c(M')$, $h\in\Omega^{0,1}(M')$. Let
\[\Box_f=\ddbar^*_f\,\ddbar: \cC^\infty(M')\To\cC^\infty(M')\]
denote the complex Laplace-Beltrami operator on functions. 
Let
\begin{equation}\label{e-gue171010yII}
P: \cC^\infty(X)\rightarrow\cC^\infty(\overline M)
\end{equation}
be the Poisson operator associated to $\Box_f$. The Poisson operator $P$ satisfies
\begin{equation}\label{e-gue171011II}
\begin{split}
&\Box_fPu=0,\ \  u\in\cC^\infty(X),\\
&\gamma Pu=u,\ \  u\in\cC^\infty(X),
\end{split}
\end{equation}
where $\gamma$ denotes the operator of restriction 
to the boundary $X$. It is known that $P$ extends continuously
\[P: H^s(X)\rightarrow H^{s+\frac{1}{2}}(\overline M),\ \ \forall s\in\mathbb R\]
(see~\cite[Page 29]{B71}). Let
\[P^*: \hat{\mathscr D}'(\overline M)\rightarrow\mathscr D'(X)\] be the operator defined by
\[(\,P^*u\,|\,v\,)_X=(\,u\,|\,Pv\,)_M,\ \ 
u\in\hat{\mathscr D}'(\overline M),\ \  v\in\cC^\infty(X),\]
where $\hat{\mathscr D}'(\overline M)$ denotes the space 
of continuous linear maps from $\cC^\infty(\ol M)$ to 
$\C$ with respect to $(\,\cdot\,|\,\cdot\,)_M$.
It is well-known (see~\cite[page 30]{B71}) that $P^*$ 
is continuous $P^*: H^s(\ol M)\rightarrow H^{s+\frac{1}{2}}(X)$
for every $s\in\R$ and
\[P^*: \cC^\infty(\overline M)\rightarrow\cC^\infty(X).\]
It is well-known that the operator
\[P^*P: \cC^\infty(X)\To\cC^\infty(X)\]
is a classical elliptic pseudodifferential operator of order $-1$ and invertible since $P$ is
injective~(see~\cite{B71}). Moreover, the operator
\[(P^*P)^{-1}: \cC^\infty(X)\To\cC^\infty(X)\]
is a classical elliptic pseudodifferential operator of order one.
We define a new inner product on $H^{-\frac{1}{2}}(X)$ as follows:
\begin{equation}\label{inner product}
[\,u\,|\,v\,]_X:=(\,Pu\,|\,Pv)_{M},\ \ u, v\in H^{-\frac{1}{2}}(X).
\end{equation}
For an operator $A$ on $H^{-\frac{1}{2}}(X)$
we denote by $A^\dagger$ the formal adjoint of $A$ 
with respect to the inner product $[\,\cdot\,|\,\cdot\,]_X$.

The next result shows that we can link the Bergman
projection with a certain approximate projector on the boundary $X$,
which is a Fourier integral operator.

\begin{theorem}[{\cite{Hsiao08}}]%[{\cite[Part II, (7.4), Proposition 7.5]{Hsiao08}}]
\label{t-gue230519yydI}
There exists a continuous operator 
\begin{equation}\label{eq:hats0}
\mathcal{S}:\cC^\infty(X)\To\cC^\infty(X), \quad 
\mathcal{S}\in L^0_{\frac{1}{2},\frac{1}{2}}(X)
\end{equation}
such that 
\begin{equation}\label{e-gue230519yyd}
B=P\mathcal{S}(P^*P)^{-1}P^*,\quad\text{on $\cC^\infty(\ol M)$}, 
\end{equation}
with the following properties,
\begin{equation}\label{eq:hats1}
\mathcal{S}^\dagger=\mathcal{S}, \quad 
\mathcal{S}^2=\mathcal{S}\:\:\text{on $\mathscr D'(X)$},
\end{equation}
and for any local coordinate patch $(D,x)$, we have 
\begin{equation}\label{e-gue230526yyd}
\mathcal{S}(x,y)\equiv\int_0^\infty e^{it\varphi(x,y)}s(x,y,t)dt
\quad\text{on $D\times D$}, 
\end{equation}
where $\varphi=\varphi_-\in\cC^\infty(D\times D)$ 
is the phase function $\varphi_-$ 
as in~\cite[Theorem 4.1]{HM14} satisfying 
\begin{equation}\label{e-gue140205IV}
\begin{split}
&\varphi\in \cC^\infty(D\times D),\ \ {\rm Im\,}\varphi(x, y)\geq0,\\
&\varphi(x, x)=0,\ \ \varphi(x, y)\neq0\ \ \mbox{if}\ \ x\neq y,\\
&d_x\varphi(x, y)\big|_{x=y}=-d_y\varphi(x, y)\big|_{x=y}=\omega_0(x), \\
&\varphi(x, y)=-\ol\varphi(y, x).
\end{split}
\end{equation}
and 
\begin{equation}\label{e-gue140205IVa}
\begin{split}
&s(x,y,t)\sim \sum_{j=0}^{+\infty}s_j(x,y)t^{n-1-j}\:\:
\text{in $S^{n-1}_{1,0}(D\times D\times\mathbb{R}_+)$},\\ 
&s_j(x,y)\in\cC^\infty(D\times D), j=0,1,\ldots,\\ 
&s_0(x,x)=\frac{1}{2}\pi^{-n}\det(\cL_x),\:\:\text{for all $x\in D_0$}.
\end{split}
\end{equation}
\end{theorem}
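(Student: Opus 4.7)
The plan is to construct $\mathcal{S}$ as an approximate Szeg\H{o}-type projector on $X$ associated with the condition that a boundary distribution have holomorphic Poisson extension, and then to derive \eqref{e-gue230519yyd} from structural identities. The conceptual input is that $\Pi := P(P^*P)^{-1}P^*$ is the orthogonal projector in $L^2(M)$ onto $\overline{\mathrm{range}(P)}$, which by \eqref{e-gue171011II} equals the ``harmonic Hardy space'' $\{u\in L^2(M): \Box_f u = 0\}$. Inside this space, $H^0_{(2)}(M)$ is cut out by the additional constraint $\ddbar u = 0$, and after transferring via $P$ this becomes a pseudodifferential constraint on boundary data that $\mathcal{S}$ should encode.

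To construct $\mathcal{S}$, I would introduce a pseudodifferential system $Q$ on $X$ encoding the condition ``$Pf$ is holomorphic'' (obtained from the tangential and normal parts of $\gamma\,\ddbar\,P$). By strict pseudoconvexity the principal symbol of $Q$ vanishes on a conic Lagrangian $\Sigma^-\subset T^*X\setminus 0$ with definite transverse Hessian of the appropriate sign, placing $Q$ in a hypoelliptic Melin--Sj\"ostrand setting that models the $\ddbar_b$-complex of the CR boundary. The Boutet de Monvel--Sj\"ostrand parametrix construction (carried out in this complex-manifold-with-boundary setting in \cite{Hsiao08}) then produces $\mathcal{S}\in L^0_{\frac12,\frac12}(X)$ with Schwartz kernel of the form \eqref{e-gue230526yyd}: the phase $\varphi_-$ is obtained by solving the complex eikonal equation on $\Sigma^-$ and coincides with that of \cite[Theorem 4.1]{HM14}, yielding \eqref{e-gue140205IV}, while the symbol $s(x,y,t)$ is obtained from a tower of transport equations whose leading solution at the diagonal gives $s_0(x,x)=\frac{1}{2}\pi^{-n}\det(\cL_x)$. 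The parametrix output satisfies $\mathcal{S}^2\equiv\mathcal{S}$ and $\mathcal{S}^\dagger\equiv\mathcal{S}$ modulo smoothing operators, and a standard self-adjointification followed by Neumann-series iteration of the projection relation promotes these congruences to the exact identities \eqref{eq:hats1} on $\mathscr D'(X)$.

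To verify \eqref{e-gue230519yyd}, denote the right-hand side by $\widetilde B$. Since $\mathcal{S}$ projects onto boundary distributions with holomorphic Poisson extension, $P\mathcal{S}f\in H^0(\ol M)$ for every $f$, so $\widetilde B$ lands in $H^0(\ol M)$. It remains to show $u-\widetilde B u\perp H^0_{(2)}(M)$ for every $u\in\cC^\infty(\ol M)$. Given a holomorphic $h\in\cC^\infty(\ol M)$ with $g:=\gamma h$, we have $h=Pg$ by \eqref{e-gue171011II} and $\mathcal{S}g=g$. Using \eqref{inner product} and $\mathcal{S}^\dagger=\mathcal{S}$,
\[(\widetilde B u\,|\,h)_M = [\,\mathcal{S}(P^*P)^{-1}P^*u\,|\,g\,]_X = [\,(P^*P)^{-1}P^*u\,|\,g\,]_X = (\Pi u\,|\,h)_M,\]
and $(\Pi u\,|\,h)_M=(u\,|\,h)_M$ because $\Pi$ is an orthogonal projector with $h\in\mathrm{range}(\Pi)$. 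This forces $\widetilde B=B$ on $\cC^\infty(\ol M)$.

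The hard part is the microlocal construction of $\mathcal{S}$ with the stated Fourier integral representation: solving the complex eikonal equation on $\Sigma^-$ to produce the phase $\varphi_-$ and computing the leading transport equation at the diagonal to pin down $s_0(x,x) = \frac{1}{2}\pi^{-n}\det(\cL_x)$ require the substantial parametrix analysis of \cite{BS75,BG81} as adapted in \cite{Hsiao08}. Once these microlocal ingredients are in place, the projection identities and the factorization of $B$ follow by direct calculation, and the resulting structure of $\mathcal{S}$ is precisely what will be exploited to reduce the analysis of $\chi_k(T_R)$ to that of the boundary Toeplitz operator $\mathcal{T}_{\mathcal{R}}=\mathcal{S}\mathcal{R}\mathcal{S}$ treated by the methods of \cite{HHMS23}.
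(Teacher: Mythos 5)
Your verification step for the factorization $B=P\mathcal{S}(P^*P)^{-1}P^*$ is a clean, correct computation, but it silently relies on two exact structural facts about $\mathcal{S}$ that your construction does not actually deliver: (a) $P\mathcal{S}f\in H^0(\ol M)$ for \emph{every} $f$, and (b) $\mathcal{S}g=g$ for \emph{every} $g$ that is the boundary trace of a holomorphic function. Your route to an exact $\mathcal{S}$ is ``self-adjointification followed by Neumann-series iteration of the projection relation.'' That procedure turns a near-idempotent, near-self-adjoint operator $S$ into an exact orthogonal projector nearby, but it does \emph{not} control onto which closed subspace the resulting projector projects. The parametrix gives $\ddbar_\beta S\equiv 0$ and $S^2\equiv S$ only modulo smoothing, so the Neumann-iterated projector a priori projects onto some subspace that agrees with $\{g:Pg\in H^0_{(2)}(M)\}$ only modulo smoothing — which is not enough to run your orthogonality argument, where you use $\mathcal{S}g=g$ exactly. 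This is the genuine gap.

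The paper closes that gap by a different, more direct correction. It first cites \cite[Proposition~7.5]{Hsiao08} for the exact identity $B=PS(P^*P)^{-1}P^*+F$ with $F$ smoothing (and $Q=\mathrm{id}$ in degree zero), and then defines
$\mathcal{S}:=S+(P^*P)^{-1}P^*FP$,
which manifestly absorbs the error and yields $B=P\mathcal{S}(P^*P)^{-1}P^*$ on the nose, together with $\mathcal{S}^2=\mathcal{S}$, $\mathcal{S}^\dagger=\mathcal{S}$. In other words, the exactification is not abstract (Neumann iteration of the projection defect) but anchored to the already-established relation between $S$ and the actual Bergman projector. To repair your argument you would either have to prove independently that the exact orthogonal projector onto $\{g\in H^{-1/2}(X): Pg\in H^0_{(2)}(M)\}$ (in the $[\,\cdot\,|\,\cdot\,]_X$ inner product) differs from the Boutet--Sj\"ostrand-type parametrix $S$ by a smoothing operator — which is essentially the content of \cite[Proposition~7.5]{Hsiao08} again — or adopt the paper's explicit correction. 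Your identification of the underlying parametrix machinery, the role of $\Sigma^-$, and the derivation of $s_0(x,x)=\tfrac12\pi^{-n}\det(\cL_x)$ are all in line with \cite{Hsiao08}; the only substantive issue is the passage from approximate to exact projector.
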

\begin{proof}
We recall here the construction from \cite{Hsiao08} 
for the convenience of the reader.
In order to link the Bergman projection to a boundary
operator we consider a version of the tangential Cauchy-Riemann
operator $\ddbar_b$ on $X$, denoted $\ddbar_{\beta}$,
expressed in terms of the Poisson extension operator, the $\ddbar$
operator and the restriction to the boundary, namely 
\[\ddbar_{\beta}:\Omega^{0,\star}(X)\to\Omega^{0,\star\,+1}(X),
\quad\ddbar_{\beta}=Q\gamma\ddbar P,
\] 
where $P$ is the Poisson operator,
$\gamma:\Omega^{0,\star}(\overline{M})\to\Omega^{0,\star}(X)$
is the restriction operator and 
$Q:H^{-1/2}(X,\Lambda^{0,\star}TM')\to
\ker(\ddbar\rho\wedge\cdot)^*\subset 
H^{-1/2}(X,\Lambda^{0,\star}TM')$
is the orthogonal projection,
cf.\ \cite[(5.1), p.\,103]{Hsiao08}.
Note that $Q$ is the operator $T$ in~\cite[(3.6), p.\,96]{Hsiao08} 
and $Q$ is the identity in degree zero.
The operator $\ddbar_{\beta}$ is a classical pseudo-differential
operator of order one on $X$, such that
$\ddbar_{\beta}=\ddbar_{b}+\text{l.o.t.}$ and
$\ddbar_{\beta}^2=0$.
The corresponding Laplace operator (cf.\ \cite[(5.6), p.\,104]{Hsiao08}),
\[\Box_{\beta}^{(\star)}=
\ddbar_{\beta}\,\ddbar_{\beta}^{\,\dagger}+
\ddbar_{\beta}^{\,\dagger}\,\ddbar_{\beta}:
\Omega^{0,\star}(X)\to\Omega^{0,\star}(X)
\] 
is a classical pseudo-differential
operator of order two on $X$, with the same principal symbol
and the same characteristic manifold
as the Kohn Laplacian $\Box_{b}=\ddbar_{b}\,\ddbar_{b}^{\,*}+
\ddbar_{b}^{\,*}\,\ddbar_{b}$,
\begin{equation}\label{eq:char1}
\Sigma=\left\{(x,t\omega_0(x))\in T^*X:x\in X,\:t\in\R\setminus\{0\}\right\}.
\end{equation}
We have 
\begin{equation}\label{eq:char2}
\Sigma=\Sigma^+\cup\Sigma^-,\:\:
\Sigma^+:=\{(x,t\omega_0(x))\in T^*X:x\in X,\:t>0\},\:\:\Sigma^-:=\Sigma\setminus\Sigma^+.
\end{equation}
Note that we use here a different sign convention than in \cite{Hsiao08},
where $\omega_0$ equals $d\rho\circ J$ (compare 
\cite[(1.9), p.\,84]{Hsiao08}, \eqref{eq:omega_0}),
thus we swap here the roles of $\Sigma^{+}$
and $\Sigma^{-}$ compared to \cite{Hsiao08}.

By Theorem \cite[Theorem 6.15, p.\,114]{Hsiao08} 
the operator $\Box_\beta^{(0)}$
acting in degree $q=0$ (that is on functions)
has a parametrix $A$
and an approximate projector $S$ 
(denoted $B_-$ in \cite{Hsiao08}) such that
\begin{equation}
\begin{split}
A\in L^{-1}_{\frac12,\frac12}(X),&\quad S\in L^{0}_{\frac12,\frac12}(X),\\
A\Box_\beta^{(0)}+S\equiv I,&\quad 
\Box_\beta^{(0)}A+S\equiv I,\\
S^2\equiv S,&\quad S^\dagger\equiv S,\\
\ddbar_\beta\,S\equiv0,&\quad \ddbar_\beta^{\,\dagger}\,S\equiv0.
\end{split}
\end{equation}
Morevover the wavefront set of the distribution kernel $S(\cdot,\cdot)$
of $\mathcal{S}$ is given by
\begin{equation}
\operatorname{WF}(S(\cdot,\cdot))=
\{(x,\xi,x,-\xi):(x,\xi)\in\Sigma^+\}\,.
\end{equation}
In \cite[(7.4), p.\,120]{Hsiao08} the operator 
$P\mathcal{S}Q(P^*P)^{-1}P^*$
is defined on $\Omega^{0,q}(\overline{M})$
and it is shown in \cite[Proposition 7.5]{Hsiao08} that its kernel equals
the Bergman kernel on $(0,q)$-forms up to a smooth form on 
$\overline{M}\times\overline{M}$. For $q=0$ the operator $Q$
is the identity so we have $B=PS(P^*P)^{-1}P^*+F$, where $F$
is smoothing. We set 
\begin{equation}\label{eq:mathcalS}
\mathcal{S}=S+(P^*P)^{-1}P^*FP.
\end{equation}
Then $\mathcal{S}^2=\mathcal{S}$, $\mathcal{S}^\dagger=\mathcal{S}$,
and $B=P\mathcal{S}(P^*P)^{-1}P^*$.
We have thus obtained \eqref{e-gue230519yyd} and \eqref{eq:hats1}.
The prpperties \eqref{e-gue230526yyd}, \eqref{e-gue140205IV}
and \eqref{e-gue230526yyd} follow from the corresponding properties of $S$.
\end{proof}
\begin{remark}
The operator $\mathcal{S}$ 
is a Toeplitz structure on $\Sigma^+$ 
in the sense of~\cite[Definition 2.10]{BG81}. 
\end{remark}

\begin{lemma}\label{l-gue230519yyd} 
For any $u\in\Dom(T_R)$ there exists $u_j\in\cC^\infty(\ol M)$, 
$j=1,2,\ldots$, such that 
$\lim_{j\To+\infty}u_j=Bu$ in $L^2(M)$ and 
$\lim_{j\To+\infty}BRBu_j=BRBu$ in $L^2(M)$.
\end{lemma}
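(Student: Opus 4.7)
I would transfer the approximation problem from the noncompact setting on $\overline M$ to the closed manifold $X$ via the boundary representation $B=P\mathcal{S}(P^*P)^{-1}P^*$ of Theorem~\ref{t-gue230519yydI}, and then apply a Friedrichs mollifier on $X$, where such techniques work cleanly.

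\emph{Boundary reduction.} Set $v:=Bu$, $\psi:=(P^*P)^{-1}P^*u\in H^{-1/2}(X)$, and $w:=\mathcal{S}\psi$, so that $v=Pw$. From $\mathcal{S}^2=\mathcal{S}$ and $B^2=B$ one obtains $\mathcal{S}w=w$, $Bv=v$, hence $BRBu=BRv$; inserting the formula for $B$ gives
\[
BRBu=P\mathcal{T}_{\mathcal{R}}w,\qquad \mathcal{T}_{\mathcal{R}}:=\mathcal{S}(P^*P)^{-1}P^*RP\mathcal{S},
\]
where $\mathcal{T}_{\mathcal{R}}$ is a first-order Toeplitz-type pseudodifferential operator on the closed manifold $X$. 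Since $P: H^{-1/2}(X)\to L^2(M)$ is a bounded bijection onto the harmonic subspace of $L^2(M)$ with left inverse the harmonic trace, the hypothesis $BRBu\in L^2(M)$ is equivalent to $\mathcal{T}_{\mathcal{R}}w\in H^{-1/2}(X)$.

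\emph{Construction and convergence.} Let $J_\epsilon$ be a Friedrichs mollifier on the closed manifold $X$ (e.g.\ obtained from the heat semigroup of a Laplacian on $X$). Set $\widetilde w_j:=J_{1/j}\psi\in\mathscr{C}^\infty(X)$, $w_j:=\mathcal{S}\widetilde w_j\in\mathscr{C}^\infty(X)$, and $u_j:=Pw_j\in\mathscr{C}^\infty(\overline M)$. Since $\mathcal{S}w_j=\mathcal{S}^2\widetilde w_j=w_j$, a direct computation gives $Bu_j=P\mathcal{S}(P^*P)^{-1}(P^*P)w_j=P\mathcal{S}w_j=u_j$, so $BRBu_j=BRu_j=P\mathcal{T}_{\mathcal{R}}w_j$. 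The first convergence $u_j\to Bu$ in $L^2(M)$ is immediate from continuity of $\mathcal{S}$ on $H^{-1/2}(X)$ and of $P: H^{-1/2}(X)\to L^2(M)$. For the second, $\mathcal{S}^2=\mathcal{S}$ gives
\[
\mathcal{T}_{\mathcal{R}}w_j=\mathcal{T}_{\mathcal{R}}\widetilde w_j=J_{1/j}\mathcal{T}_{\mathcal{R}}\psi+[\mathcal{T}_{\mathcal{R}},J_{1/j}]\psi;
\]
the first summand tends to $\mathcal{T}_{\mathcal{R}}\psi=\mathcal{T}_{\mathcal{R}}w$ in $H^{-1/2}(X)$ by the hypothesis and the strong convergence $J_\epsilon\to I$, and the second tends to zero in $H^{-1/2}(X)$ by Friedrichs' commutator estimate. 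Continuity of $P$ then yields $BRBu_j\to BRBu$ in $L^2(M)$.

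\emph{Main obstacle.} The technical heart is obtaining convergence of $\mathcal{T}_{\mathcal{R}}w_j$ in $H^{-1/2}(X)$ rather than in a weaker norm: because $\mathcal{T}_{\mathcal{R}}$ has order one and $w$ lies only in $H^{-1/2}(X)$, $H^{-1/2}$-convergence of $\mathcal{T}_{\mathcal{R}}w_j$ is not automatic from $w_j\to w$ in $H^{-1/2}$, and both the hypothesis $BRBu\in L^2(M)$ (which encodes $\mathcal{T}_{\mathcal{R}}w\in H^{-1/2}(X)$) and Friedrichs' commutator estimate on the closed manifold $X$ are essential. In effect, the lemma expresses that $\mathscr{C}^\infty(X)\cap\mathrm{Range}(\mathcal{S})$ forms a core for the maximal closed extension of $\mathcal{T}_{\mathcal{R}}$ on the Hilbert space $\bigl(H^{-1/2}(X),[\cdot|\cdot]_X\bigr)$.
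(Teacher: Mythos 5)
Your boundary reduction $T_R=P\mathcal{T}_{\mathcal{R}}(P^*P)^{-1}P^*$ with $\mathcal{T}_{\mathcal{R}}=\mathcal{S}L\mathcal{S}$, $L=(P^*P)^{-1}P^*RP$, matches the paper exactly, and your observations $\mathcal{S}\psi=\psi$, $Bu_j=u_j$, $\mathcal{T}_{\mathcal{R}}w_j=\mathcal{T}_{\mathcal{R}}\widetilde w_j$ are correct. The strategy after that, however, diverges in a way that introduces a genuine gap.

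You invoke ``Friedrichs' commutator estimate'' to show $[\mathcal{T}_{\mathcal{R}},J_{1/j}]\psi\to0$ in $H^{-1/2}(X)$. The issue is that $\mathcal{T}_{\mathcal{R}}=\mathcal{S}L\mathcal{S}$ is not a classical pseudodifferential operator of type $(1,0)$: by Theorem~\ref{t-gue230519yydI}, $\mathcal{S}\in L^0_{1/2,1/2}(X)$, and in fact it is a Fourier integral operator with complex phase. The Friedrichs commutator lemma (and its usual pseudodifferential generalizations) gives the derivative gain $[A,J_\epsilon]:H^s\to H^{s-m+1}$, uniformly and strongly to zero, only when $A$ is of type $(1,0)$; for type $(1/2,1/2)$ the commutator gains at most half a derivative, and for an FIO with complex phase there is no such estimate at all. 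Concretely, writing $[\mathcal{S}L\mathcal{S},J_\epsilon]=[\mathcal{S},J_\epsilon]L\mathcal{S}+\mathcal{S}[L,J_\epsilon]\mathcal{S}+\mathcal{S}L[\mathcal{S},J_\epsilon]$, the middle term is controlled (since $L\in L^1_{\mathrm{cl}}(X)$ is type $(1,0)$), but the outer terms involve $[\mathcal{S},J_\epsilon]$ acting on data at the level $H^{-3/2}$, and there is no estimate returning the result to $H^{-1/2}$. So your decomposition of $\mathcal{T}_{\mathcal{R}}w_j$ is correct but the claimed limit of the commutator term is unjustified, and I do not see how to repair it without an analysis of the FIO structure of $\mathcal{S}$ that would be comparable in depth to what the paper already cites.

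The paper avoids this problem entirely by a regularity upgrade rather than a commutator estimate: using that $\mathcal{S}$ is concentrated on the ray $\Sigma^+$ where $L$ is microlocally elliptic, the hypothesis $\mathcal{S}L\mathcal{S}\psi\in H^{-1/2}(X)$, combined with $\mathcal{S}\psi=\psi$, forces $\psi\in H^{1/2}(X)$ (this is the subelliptic/microlocal regularity from the argument of \cite[Theorem 3.3]{HHMS23}). Once $\psi\in H^{1/2}$, one simply approximates $\psi$ by $v_j\in\cC^\infty(X)$ in $H^{1/2}$ and sets $u_j=Pv_j$; since $\mathcal{S}L\mathcal{S}:H^{1/2}(X)\to H^{-1/2}(X)$ and $P:H^{-1/2}(X)\to L^2(M)$ are both continuous, the convergence $BRBu_j\to BRBu$ is automatic — no commutator at all. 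The regularity step is where the real work lives, and that is exactly what your commutator argument would have to substitute for; as written it does not.
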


\begin{proof}
Let $u\in\Dom(T_R)$. We may assume that $u=Bu$. Then, 
\[u=P(P^*P)^{-1}P^*u=P\mathcal{S}(P^*P)^{-1}P^*u.\] 
From \eqref{e-gue230519yyd}, we have 
\begin{equation}\label{e-gue230519yyda}
BRBu=P\mathcal{S}(P^*P)^{-1}P^*RP\mathcal{S}(P^*P)^{-1}P^*u
=P\mathcal{S}L\mathcal{S}(P^*P)^{-1}P^*u,
\end{equation}
where $L=(P^*P)^{-1}P^*RP$. It is straightforward to check that 
$L\in L^1_{{\rm cl\,}}(X)$ and 
\[\sigma^0_L(x,\omega_0(x))\neq0\] 
at every $x\in X$, where $\sigma^0_L$ denotes the principal symbol of $L$. Since 
$\mathcal{S}L\mathcal{S}(P^*P)^{-1}P^*u\in H^{-\frac{1}{2}}(X)$, 
we can repeat the proof of~\cite[Theorem 3.3]{HHMS23} and deduce that 
$(P^*P)^{-1}P^*u\in H^{\frac{1}{2}}(X)$. Let $v_j\in\cC^\infty(X)$, $j=1,2,\ldots$, 
$v_j\To(P^*P)^{-1}P^*u$ in $H^{\frac{1}{2}}(X)$ as $j\To+\infty$. 
Then, $u_j:=Pv_j\To P(P^*P)^{-1}P^*u=u$ in $H^{1}(\ol M)$ as $j\To+\infty$ and 
$BRBu_j\To BRBu$ in $L^2(\ol M)$ as $j\To+\infty$.
\end{proof} 

\begin{proof}[Proof of Theorem~\ref{t-gue230525yyd}]
Let $T_R^*: \Dom(T_R^*)\subset L^2(X)\To L^2(X)$ be the $L^2$ 
adjoint of $T_R$. Let $u\in\Dom(T_R)$. From Lemma~\ref{l-gue230519yyd}, 
for every $v\in\Dom(T_R)$, we have 
\begin{equation}\label{e-gue230519yyds}
(\,u\,|\,Av\,)_M=(\,Bu\,|\,Av\,)_M=\lim_{j\To+\infty}(\,Bu_j\,|\,BRBv_j\,)_M,
\end{equation}
where $u_j, v_j\in\cC^\infty(\ol M)$, $j=1,2,\ldots$, such that 
$\lim_{j\To+\infty}u_j=Bu$ in $L^2(M)$, $\lim_{j\To+\infty}v_j=Bv$ in $L^2(M)$, 
$\lim_{j\To+\infty}BRBu_j=BRBu$ in $L^2(M)$ and $\lim_{j\To+\infty}BRBv_j=BRBv$ 
in $L^2(M)$. From \eqref{e-gue230519yyds} and since $R\rho=0$ on $X$, 
we can integrate by parts and deduce that 
\[(\,u\,|\,T_Rv\,)_M=(\,T_Ru\,|\,v\,)_M,\ \ \mbox{for every $v\in\Dom(T_R)$}.\]
Thus, $u\in\Dom(T_R^*)$ and $T_R^*u=T_Ru$. 
Let $u\in\Dom(T_R^*)$. Since $\cC^\infty(\ol M)\subset\Dom(T_R)$, 
we deduce that there is a constant $C>0$ such that 
\[\abs{(\,u\,|\,BRBv\,)_M}\leq C\norm{v}_M,\ \ \mbox{for every $v\in\cC^\infty(\ol M)$}.\]
Thus, $BRBu\in L^2(M)$ and hence $u\in\Dom(T_R)$.
\end{proof} 

\section{Asymptotic expansion of $\chi_k(T_R)$}\label{s-gue230527yyd}

In this Section we will reduce the study of the
Toeplitz operator $T_R$ to the study of 
a Toeplitz operator $\mathcal{T}_{\mathcal{R}}$ on the boundary
$X$ and apply results from \cite{HHMS23} in order
to prove Theorem \ref{t-gue230528yydmp}.
The Toeplitz operator on the boundary is defined by 
\begin{equation}\label{e-gue230527yyda}
\mathcal{T}_{\mathcal{R}}:=
\mathcal{S}\mathcal{R}\mathcal{S}:\cC^\infty(X)\To\cC^\infty(X), 
\end{equation}
where $\mathcal{S}$ is as in Theorem~\ref{t-gue230519yydI} and 
\begin{equation}\label{e-gue230527yydab}
\mathcal{R}:=(P^*P)^{-1}P^*RP\in L^1_{{\rm cl\,}}(X). 
\end{equation}
Note that by \eqref{e-gue230519yyda} we have
\begin{equation}\label{e-gue230527yydaba}
T_R=P(\mathcal{S}\mathcal{R}\mathcal{S})(P^*P)^{-1}P^*=
P\mathcal{T}_{\mathcal{R}}(P^*P)^{-1}P^*. 
\end{equation}
We extend 
$\mathcal{T}_{\mathcal{R}}$ to $H^{-\frac{1}{2}}(X)$: 
\begin{equation}\label{e-gue230527yydaba1}
\begin{split}
&\mathcal{T}_{\mathcal{R}}: \Dom(\mathcal{T}_{\mathcal{R}})\subset 
H^{-\frac{1}{2}}(X)\To H^{-\frac{1}{2}}(X),\\
&\Dom(\mathcal{T}_{\mathcal{R}})=\set{u\in H^{-\frac{1}{2}}(X);\, 
\mathcal{S}\mathcal{R}\mathcal{S}u\in H^{-\frac{1}{2}}(X)}.
\end{split}
\end{equation}
The operator $\mathcal{S}$ is a Toeplitz structure (generalized Szeg\H{o}
projector) in the sense 
of~\cite[Definition 2.10]{BG81}.
Let $\operatorname{Im}(\mathcal{S})$
be the image of $\mathcal{S}$ in $L^2(X)$.
By~\cite[Proposition 2.14]{BG81} the spectrum of
the operator 
$\mathcal{T}_{\mathcal{R}}|_{\operatorname{Im}(\mathcal{S})}:
\operatorname{Im}(\mathcal{S})
\to\operatorname{Im}(\mathcal{S})$ consists only of isolated
eigenvalues of finite multiplicity, is bounded from below
and has only $+\infty$ as a point of accumulation.
We have $\operatorname{Spec}(\mathcal{T}_{\mathcal{R}})\setminus\{0\}=
\operatorname{Spec}(\mathcal{T}_{\mathcal{R}}|_{\operatorname{Im}(\mathcal{S})})
\setminus\{0\}$ and the restrictions to $\R\setminus\{0\}$ of spectral measures
of these operators coincide.
We conclude that the operator 
$\mathcal{T}_{\mathcal{R}}$ in \eqref{e-gue230527yydaba1} 
is self-adjoint with respect to $[\,\cdot\,|\,\cdot\,]_X$ and 
its spectrum consists only of isolated
eigenvalues, is bounded from below
and has only $+\infty$ as a point of accumulation.
Moreover, for every 
$\lambda\in\operatorname{Spec}(\mathcal{T}_{\mathcal{R}})$, 
$\lambda\neq0$, the eigenspace 
\[E_\lambda=\set{u\in\Dom(\mathcal{T}_{\mathcal{R}}):
\mathcal{T}_{\mathcal{R}}u=\lambda u}\]
is a finite dimensional subspace of $\cC^\infty(X)$.
\begin{remark}\label{rem:chi}
The kernel of $\mathcal{T}_{\mathcal{R}}$ contains
the kernel of $\mathcal{S}$, so in order to avoid the zero
eigenvalue we consider the operator $\chi_k(\mathcal{T}_{\mathcal{R}})$
associated to a function $\chi$ with support in $(0,+\infty)$.
In this way the image of $\chi_k(\mathcal{T}_{\mathcal{R}})$
is contained in $\operatorname{Im}(\mathcal{S})$.
\end{remark}
%===
\begin{lemma}\label{l-gue230520yyd}
For every $z\in\mathbb C$, $z\notin\mathbb R$, we have 
\begin{equation}\label{e-gue230527yydf}
(z-T_R)^{-1}B=P(z-\mathcal{T}_{\mathcal{R}})^{-1}\mathcal{S}(P^*P)^{-1}P^*.
\end{equation}
\end{lemma}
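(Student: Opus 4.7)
The plan is to prove the identity by direct verification: apply $(z-T_R)$ to the proposed right-hand side and check that the result is $B$, then invoke self-adjointness of $T_R$ (Theorem~\ref{t-gue230525yyd}) together with $z\notin\mathbb R$ to invert and conclude.

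The first step is to derive the intertwining relation $T_R P=P\mathcal{T}_{\mathcal{R}}$. This follows from the identity $BP=P\mathcal{S}(P^*P)^{-1}P^*P=P\mathcal{S}$ together with the definitions \eqref{e-gue230527yyda}--\eqref{e-gue230527yydab}, via the short computation
\[T_R P=BRBP=BR(P\mathcal{S})=P\mathcal{S}(P^*P)^{-1}P^*RP\mathcal{S}=P\mathcal{S}\mathcal{R}\mathcal{S}=P\mathcal{T}_{\mathcal{R}}.\]

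The second step is to verify that, for any $u\in L^2(M)$, the vector $v:=P(z-\mathcal{T}_{\mathcal{R}})^{-1}\mathcal{S}(P^*P)^{-1}P^*u$ lies in $\Dom(T_R)$. For this I would track Sobolev regularity through the chain: $(P^*P)^{-1}P^*$ sends $L^2(M)$ into $H^{-1/2}(X)$; $\mathcal{S}\in L^0_{1/2,1/2}(X)$ preserves $H^{-1/2}(X)$ and projects onto $\operatorname{Im}(\mathcal{S})$, where the resolvent $(z-\mathcal{T}_{\mathcal{R}})^{-1}$ is well defined by the spectral theory described after \eqref{e-gue230527yydaba1}; an argument parallel to that in the proof of Lemma~\ref{l-gue230519yyd} (using that the principal symbol of $\mathcal{R}$ on $\Sigma^+$ is nonvanishing) shows that $(z-\mathcal{T}_{\mathcal{R}})^{-1}$ raises regularity by one order on $\operatorname{Im}(\mathcal{S})$, so that $v\in H^1(\ol M)$ and hence $BRBv\in L^2(M)$.

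The third step is the algebraic manipulation. Using the intertwining identity from step one and the fact that $\mathcal{T}_{\mathcal{R}}$ commutes with its own resolvent on $\operatorname{Im}(\mathcal{S})$ by Borel functional calculus for the self-adjoint operator $(\mathcal{T}_{\mathcal{R}},[\,\cdot\,|\,\cdot\,]_X)$, one computes
\[(z-T_R)v=P(z-\mathcal{T}_{\mathcal{R}})(z-\mathcal{T}_{\mathcal{R}})^{-1}\mathcal{S}(P^*P)^{-1}P^*u=P\mathcal{S}(P^*P)^{-1}P^*u=Bu,\]
where the last equality is \eqref{e-gue230519yyd}. Since $T_R$ is self-adjoint and $z\notin\mathbb R$, the resolvent $(z-T_R)^{-1}$ is bounded on $L^2(M)$, and applying it on the left yields \eqref{e-gue230527yydf}.

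The main obstacle is the domain verification in the second step: the formal manipulation $(z-T_R)P(z-\mathcal{T}_{\mathcal{R}})^{-1}\cdots=P(z-\mathcal{T}_{\mathcal{R}})(z-\mathcal{T}_{\mathcal{R}})^{-1}\cdots$ is legitimate only once we know that $v$ is regular enough for $T_R$ to act classically and for the intertwining identity to be applied beyond $\mathscr{C}^\infty$. This is a regularity question for the boundary Toeplitz operator $\mathcal{T}_{\mathcal{R}}$ that directly parallels the regularity argument already carried out in the proof of Lemma~\ref{l-gue230519yyd}, so it should not introduce essentially new difficulties.
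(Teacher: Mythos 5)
Your proposal is correct and takes essentially the same route as the paper: the key algebraic fact is the identity $T_RB=P\mathcal{T}_{\mathcal{R}}(P^*P)^{-1}P^*$ (which you reformulate as the intertwining $T_RP=P\mathcal{T}_{\mathcal{R}}$, an equivalent statement via $BP=P\mathcal{S}$), from which one computes $(z-T_R)$ applied to one side and reads off the resolvent formula after invoking self-adjointness. The paper computes $(z-T_R)B=P(z-\mathcal{T}_{\mathcal{R}})\mathcal{S}(P^*P)^{-1}P^*$ and composes on the left with $P(z-\mathcal{T}_{\mathcal{R}})^{-1}(P^*P)^{-1}P^*$ rather than applying $(z-T_R)$ to the candidate right-hand side, but this is the same computation in a slightly different order; your explicit remarks on domain membership and regularity, which the paper leaves implicit, are a reasonable supplement rather than a deviation.
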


\begin{proof}
From \eqref{e-gue230519yyd} and \eqref{e-gue230519yyda} we have 
\begin{equation}\label{e-gue230520yyd}
(z-T_R)B=P(z-\mathcal{S}\mathcal{R}\mathcal{S})(P^*P)^{-1}P^*B=P(z-\mathcal{T}_{\mathcal{R}})\mathcal{S}(P^*P)^{-1}P^*.
\end{equation}
From \eqref{e-gue230520yyd}, we have 
\[P(z-\mathcal{T}_{\mathcal{R}})^{-1}(P^*P)^{-1}P^*(z-T_R)B=B.\]
Thus,
\[\begin{split}
    (z-T_R)^{-1}B&=P(z-\mathcal{T}_{\mathcal{R}})^{-1}(P^*P)^{-1}P^*B\\
    &=P(z-\mathcal{T}_{\mathcal{R}})^{-1}(P^*P)^{-1}P^*P\mathcal{S}(P^*P)^{-1}P^*\\
    &=P(z-\mathcal{T}_{\mathcal{R}})^{-1}\mathcal{S}(P^*P)^{-1}P^*.\end{split}\]
    The lemma follows. 
\end{proof}

\begin{lemma}\label{l-gue230527ycd}
We have 
\begin{equation}\label{e-gue230527ycda}
\chi_k(T_R)=P\chi_k(\mathcal{T}_{\mathcal{R}})(P^*P)^{-1}P^*.
\end{equation}
\end{lemma}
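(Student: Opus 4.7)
The plan is to deduce the identity from the resolvent formula of Lemma~\ref{l-gue230520yyd} by means of the Helffer--Sj\"ostrand functional calculus, after two cosmetic simplifications provided by the assumption $\supp\chi\subset(0,+\infty)$.

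First I would record two projection identities. Since $T_R=BRB$ commutes with $B$ (both $T_RB$ and $BT_R$ equal $BRB$) and vanishes on $\Ker B$, and since $\chi(0)=0$, the spectral theorem for the self-adjoint operator $T_R$ (Theorem~\ref{t-gue230525yyd}) yields $\chi_k(T_R)(I-B)=0$, i.e.\ $\chi_k(T_R)=\chi_k(T_R)B$. Dually, the relations $\mathcal{S}^2=\mathcal{S}$ and $\mathcal{T}_{\mathcal{R}}=\mathcal{S}\mathcal{R}\mathcal{S}$ give $\mathcal{S}\mathcal{T}_{\mathcal{R}}=\mathcal{T}_{\mathcal{R}}\mathcal{S}=\mathcal{T}_{\mathcal{R}}$, so $I-\mathcal{S}$ maps into $\Ker\mathcal{T}_{\mathcal{R}}$, and again by $\chi(0)=0$ one obtains $\chi_k(\mathcal{T}_{\mathcal{R}})\mathcal{S}=\chi_k(\mathcal{T}_{\mathcal{R}})$.

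For the main step I would fix an almost analytic extension $\Td\chi_k\in\cC^\infty_c(\C)$ of $\chi_k$ and invoke the Helffer--Sj\"ostrand representation
\[
\chi_k(T_R)=-\frac{1}{\pi}\int_{\C}\frac{\partial\Td\chi_k}{\partial\ol z}(z)\,(z-T_R)^{-1}\,dL(z),
\]
which converges in $\mathcal{L}(L^2(M))$ because $\chi_k$ has compact support. Multiplying on the right by $B$ (permissible by the first reduction) and substituting the formula of Lemma~\ref{l-gue230520yyd} for $(z-T_R)^{-1}B$ inside the integrand gives
\[
\chi_k(T_R)=-\frac{1}{\pi}\int_{\C}\frac{\partial\Td\chi_k}{\partial\ol z}(z)\,P(z-\mathcal{T}_{\mathcal{R}})^{-1}\mathcal{S}(P^*P)^{-1}P^*\,dL(z).
\]
Pulling the $z$-independent bounded operators $P$ to the left and $\mathcal{S}(P^*P)^{-1}P^*$ to the right out of the Bochner integral, I recognise the remaining inner integral as the Helffer--Sj\"ostrand representation of $\chi_k(\mathcal{T}_{\mathcal{R}})$, yielding $\chi_k(T_R)=P\chi_k(\mathcal{T}_{\mathcal{R}})\mathcal{S}(P^*P)^{-1}P^*$. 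Applying the second reduction $\chi_k(\mathcal{T}_{\mathcal{R}})\mathcal{S}=\chi_k(\mathcal{T}_{\mathcal{R}})$ to absorb the stray $\mathcal{S}$ then gives the claimed formula.

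The only non-trivial technical point is the handling of operator-valued integrals across two different Hilbert spaces ($L^2(M)$ for $T_R$ and $H^{-\frac12}(X)$ for $\mathcal{T}_{\mathcal{R}}$) and the legitimacy of factoring $P$ and $\mathcal{S}(P^*P)^{-1}P^*$ outside the integrals. Because $\chi_k$ has compact support, and $P$ and $(P^*P)^{-1}P^*$ are bounded between fixed Sobolev scales with norms uniform in $z\in\supp\Td\chi_k$, this reduces to a routine dominated-convergence argument. A cleaner alternative, sidestepping Helffer--Sj\"ostrand altogether, would be to check the identity directly on a $[\,\cdot\,|\,\cdot\,]_X$-orthonormal basis of eigenfunctions of $\mathcal{T}_{\mathcal{R}}|_{\operatorname{Im}\mathcal{S}}$ (available by the discreteness of the non-zero spectrum recorded after \eqref{e-gue230527yydaba1}), whose images under $P$ form an orthonormal basis of $H^0_{(2)}(M)$ consisting of eigenfunctions of $T_R$ with the same non-zero eigenvalues.
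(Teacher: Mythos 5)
Your proof is correct and follows essentially the same route as the paper: Helffer--Sj\"ostrand representation plus the resolvent identity of Lemma~\ref{l-gue230520yyd}. In fact you are more careful than the paper's one-line argument, since you explicitly record the two identities $\chi_k(T_R)B=\chi_k(T_R)$ and $\chi_k(\mathcal{T}_{\mathcal{R}})\mathcal{S}=\chi_k(\mathcal{T}_{\mathcal{R}})$ (both consequences of $\supp\chi\subset(0,+\infty)$) that are tacitly used to insert the $B$ and absorb the stray $\mathcal{S}$.
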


\begin{proof}
From the Helffer-Sj\"ostrand formula \cite[\S 8]{DiSj99} and 
\eqref{e-gue230527yydf}, we have 
\[\begin{split}
\chi_k(T_R)&=\frac{1}{2\pi i}\int_{\mathbb C}\frac{\pr\tilde\chi_k}{\pr\ol z}(z)(z-T_R)^{-1}dzd\ol z\\
&=\frac{1}{2\pi i}\int_{\mathbb C}\frac{\pr\tilde\chi_k}{\pr\ol z}(z)P(z-\mathcal{T}_{\mathcal{R}})^{-1}\mathcal{S}(P^*P)^{-1}P^*dzd\ol z\\
&=P\chi_k(\mathcal{T}_{\mathcal{R}})(P^*P)^{-1}P^*,
\end{split}\]
where $\tilde\chi_k$ denotes an almost analytic extension of $\chi_k$. The lemma follows. 
\end{proof}

\begin{corollary}\label{t-gue230527ycd}
We have 
\begin{equation}\label{e-gue230527ycds}
\chi_k(T_R)(x,y)\in\cC^\infty(\ol M\times\ol M).
\end{equation}
%Let $\tau, \hat\tau\in\cC^\infty(\ol M)$, 
%$\supp\tau\cap\supp\hat\tau=\emptyset$. We have 
%\begin{equation}\label{e-gue230527yydz}
%\tau\chi_k(T_R)\hat\tau\equiv0\mod O(k^{-\infty})\ \ 
%\mbox{on $\ol M\times\ol M$}. 
%\end{equation}
\end{corollary}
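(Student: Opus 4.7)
The plan is to invoke Lemma~\ref{l-gue230527ycd} to write $\chi_k(T_R)=P\chi_k(\mathcal{T}_{\mathcal{R}})(P^*P)^{-1}P^*$, then show that $\chi_k(\mathcal{T}_{\mathcal{R}})$ is a finite-rank operator whose distribution kernel lies in $\cC^\infty(X\times X)$, and finally deduce smoothness up to the boundary on $\ol M\times\ol M$ by applying the standard mapping properties of the Poisson operator and its adjoint.

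For the first step, the key point is that $\chi\in\cC^\infty_c(\mathbb R_+,\mathbb R)$ has support in $(0,+\infty)$, so $\supp\chi_k\Subset(0,+\infty)$ for each $k>0$. By the discussion preceding Remark~\ref{rem:chi}, which rests on \cite[Proposition~2.14]{BG81}, the spectrum $\operatorname{Spec}(\mathcal{T}_{\mathcal{R}})\setminus\{0\}$ consists of isolated eigenvalues of finite multiplicity, bounded from below and accumulating only at $+\infty$. Hence $\operatorname{Spec}(\mathcal{T}_{\mathcal{R}})\cap\supp\chi_k$ is finite, each corresponding eigenspace $E_\lambda$ is a finite dimensional subspace of $\cC^\infty(X)$, and the functional calculus gives
\begin{equation*}
\chi_k(\mathcal{T}_{\mathcal{R}})=\sum_{\lambda\in\operatorname{Spec}(\mathcal{T}_{\mathcal{R}})\cap\supp\chi_k}\chi_k(\lambda)\,\Pi_\lambda,
\end{equation*}
a finite sum of finite-rank spectral projectors onto smooth eigenspaces. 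Choosing an orthonormal basis for each $E_\lambda$ (with respect to $[\,\cdot\,|\,\cdot\,]_X$), the distribution kernel of $\chi_k(\mathcal{T}_{\mathcal{R}})$ is a finite sum of tensor products of smooth functions on $X$, hence it belongs to $\cC^\infty(X\times X)$.

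For the second step, I would use the mapping properties recalled in Section~\ref{s-su}: the Poisson operator satisfies $P:\cC^\infty(X)\To\cC^\infty(\ol M)$ and $P^*:\cC^\infty(\ol M)\To\cC^\infty(X)$, while $(P^*P)^{-1}$, being a classical elliptic pseudodifferential operator of order one on $X$, preserves $\cC^\infty(X)$. Writing the kernel of $\chi_k(\mathcal{T}_{\mathcal{R}})$ as $\sum_j\phi_j(x)\overline{\psi_j(y)}$ with finitely many $\phi_j,\psi_j\in\cC^\infty(X)$, the factorization~\eqref{e-gue230527ycda} gives
\begin{equation*}
\chi_k(T_R)(x,y)=\sum_j(P\phi_j)(x)\,\overline{\bigl(P(P^*P)^{-1}\psi_j\bigr)(y)},
\end{equation*}
and each factor is smooth on $\ol M$, so the kernel lies in $\cC^\infty(\ol M\times\ol M)$.

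The only point of substance is the smoothness and finite dimensionality of the eigenspaces $E_\lambda$ for $\lambda\neq 0$, together with the structure of the spectrum; these are precisely what \cite[Proposition~2.14]{BG81} delivers and they are already recorded in the excerpt preceding Remark~\ref{rem:chi}. Given this, the corollary reduces to a direct composition argument and presents no further difficulty.
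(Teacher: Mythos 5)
Your proof is correct and follows essentially the same route as the paper: the paper's proof is the one-line remark that the claim follows from the factorization \eqref{e-gue230527ycda} together with $\chi_k(\mathcal{T}_{\mathcal{R}})\in\cC^\infty(X\times X)$, and you simply spell out why $\chi_k(\mathcal{T}_{\mathcal{R}})$ has a smooth kernel (finite sum of rank-one projectors onto smooth eigenspaces, which the paper has already recorded before Remark~\ref{rem:chi}) and why conjugating by $P$ and $(P^*P)^{-1}P^*$ preserves smoothness up to the boundary.
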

\begin{proof}
This follows from \eqref{e-gue230527ycda} and from the fact
that $\chi_k(\mathcal{T}_{\mathcal{R}})\in\cC^\infty(X\times X)$.
%we get \eqref{e-gue230527ycds}. 
\end{proof}
We need the following variant of \cite[Theorem 1.1]{HHMS23}.
%---
\begin{theorem}
\label{thm:ExpansionMain}
Let $(X,HX,J)$ be an orientable 
compact strictly pseudoconvex Cauchy-Riemann manifold
of dimension $2n-1$, $n\geq2$.
%such that the Kohn Laplacian on $X$ has closed range in 
%$L^2(X)$. 
We consider:

(a) A Riemannian metric $g^{TX}$ compatible with $J$,
with volume form $dv_X$ and the associated $L^2$-space 
$L^2(X)=L^2(X,dv_X)$.
 
(b) A contact form $\omega_0$ on $X$ such that the Levi form
$\mathcal{L}=\frac12d\omega_0(\cdot,J\cdot)$ is positive definite.
We denote by $dv_{\omega_0}=\omega_0\wedge(d\omega_0)^{n-1}$.

(c) An operator $\mathcal{S}:\cC^\infty(X)\To\cC^\infty(X)$, 
satisfying \eqref{eq:hats0} and 
\eqref{eq:hats1}-%, \eqref{e-gue230526yyd},
\eqref{e-gue140205IVa}.

(d) For a 
formally self-adjoint first order pseudodifferential operator 
$Q\in L^1_\mathrm{cl}(X)$ we consider the Toeplitz operator 
$\mathcal{T}_Q=\mathcal{S}Q\mathcal{S}:L^2(X)\to L^2(X)$.

Let $(D,x)$ be any coordinates patch and let 
$\varphi:D\times D\to\C$ be the phase function satisfying 
\eqref{e-gue230526yyd} and \eqref{e-gue140205IV}.
Then for any formally self-adjoint first order pseudodifferential operator 
$Q\in L^1_\mathrm{cl}(X)$ whose symbol $\sigma_Q$ satisfies
$\sigma_Q(\omega_0)>0$ on $X$,  
and for any $\chi\in\cC^\infty_c((0,+\infty))$, $\chi\not\equiv 0$,
the Schwartz kernel of $\chi_k(T_Q)$, 
$\chi_{k}(\lambda):=\chi\left(k^{-1}\lambda\right)$,
can be represented for $k$ large by
\begin{equation}
\label{eq:asymptotic expansion of chi_k(T_Q)}
\chi_k(\mathcal{T}_Q)(x,y)=\int_0^{+\infty} 
e^{ikt\varphi(x,y)}{A}(x,y,t,k)dt+O\left(k^{-\infty}\right)~\text{on}~D\times D,
\end{equation}
where ${A}(x,y,t,k)\in S^{n}_{\mathrm{loc}}
(1;D\times D\times{\R}_+)$,
\begin{equation}
\label{Eq:LeadingTermMainThm}
\begin{split}
&{A}(x,y,t,k)\sim\sum_{j=0}^{+\infty} {A}_{j}(x,y,t)k^{n-j}~
\mathrm{in}~S^{n+1}_{\mathrm{loc}}(1;D\times D\times{\R}_+),\\
&A_j(x,y,t)\in\mathscr{C}^\infty(D\times D\times{\R}_+),~j=0,1,2,\ldots,\\
&{A}_{0}(x,x,t)=\frac{1}{2\pi ^{n}}
\frac{dv_{\omega_0}}{dv_X}(x)\,
\chi(t\sigma_Q(\omega_0(x)))\,t^{n-1}\not\equiv 0,
\end{split}
\end{equation}
and for some compact interval $I\Subset\R_+$,
\begin{equation}
\begin{split}
\supp_t A(x,y,t,k),~\supp_t A_j(x,y,t)\subset I,\ \ j=0,1,2,\ldots.
\end{split}
\end{equation}
Moreover, for any $\tau_1,\tau_2\in\cC^\infty(X)$ 
such that $\supp(\tau_1)\cap\supp(\tau_2)=\emptyset$, 
we have
\begin{equation}
\label{Eq:FarAwayDiagonalMainThm}
\tau_1\chi_k(T_P)\tau_2=O\left(k^{-\infty}\right).
\end{equation}
\end{theorem}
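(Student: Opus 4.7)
The plan is to observe that Theorem~\ref{thm:ExpansionMain} is essentially a restatement of \cite[Theorem~1.1]{HHMS23}, with the sole difference that the projector $\mathcal{S}$ is specified abstractly through hypothesis (c) rather than being taken to be the Szegő projector of a concrete CR manifold. Since the argument in \cite{HHMS23} uses only the microlocal structure of $\mathcal{S}$---the FIO representation \eqref{e-gue230526yyd} with complex phase $\varphi$ satisfying \eqref{e-gue140205IV}-\eqref{e-gue140205IVa}, the idempotence and self-adjointness $\mathcal{S}^2\equiv \mathcal{S}^\dagger\equiv \mathcal{S}$, and the normalization $s_0(x,x) = \frac{1}{2}\pi^{-n}\det(\mathcal{L}_x)$---the proof transfers without modification. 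It thus suffices to review the key steps and verify that no additional structural input from the CR setting is invoked.

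First, the far-from-diagonal estimate \eqref{Eq:FarAwayDiagonalMainThm} follows from the identity $\chi_k(\mathcal{T}_Q) \equiv \mathcal{S}\chi_k(\mathcal{T}_Q)\mathcal{S}$ modulo smoothing (valid because $\chi(0)=0$ and $\mathcal{S}$ projects onto the image where $\mathcal{T}_Q$ acts nontrivially, cf.\ Remark~\ref{rem:chi}), combined with the fact that the wavefront set of $\mathcal{S}$ is contained in the conormal $\{(x,\xi,x,-\xi): (x,\xi)\in\Sigma^+\}$, forcing the resulting kernel to be smooth and $O(k^{-\infty})$ once localized by disjointly supported cut-offs. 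For the on-diagonal expansion \eqref{eq:asymptotic expansion of chi_k(T_Q)}, one invokes the Helffer-Sjöstrand formula
\[
\chi_k(\mathcal{T}_Q) = \frac{1}{2\pi i}\int_{\C} \frac{\partial\tilde\chi_k}{\partial\bar z}(z)(z-\mathcal{T}_Q)^{-1}\,dz\wedge d\bar z,
\]
where $\tilde\chi_k$ is an almost-analytic extension of $\chi_k$, and constructs a microlocal parametrix for $\mathcal{S}(z-\mathcal{T}_Q)^{-1}\mathcal{S}$ as an oscillatory integral of the form $\int_0^{+\infty} e^{it\varphi(x,y)} g(x,y,t,z)\,dt$. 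The transport equations for the symbol $g \sim \sum g_j\,t^{-j}$ arise from the composition formula for complex-phase FIOs via stationary phase in the intermediate variables, and reduce to leading order to the algebraic relation $(z - t\sigma_Q(\omega_0(x)))\,g_0(x,y,t) = s_0(x,y)$, giving $g_0$ a simple pole at $t = z/\sigma_Q(\omega_0(x))$. Substituting the parametrix back into Helffer-Sjöstrand and performing the $z$-contour integral by the almost-analytic residue computation produces \eqref{eq:asymptotic expansion of chi_k(T_Q)}, with the rescaling $t\mapsto kt$ yielding the leading power $k^n$.

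The main technical obstacle is carrying out the composition of FIOs with complex-valued phases of non-negative imaginary part; this requires the Melin--Sjöstrand stationary-phase formalism with almost-analytic extensions, in particular to justify that the critical manifold $\{x=y\}$ of $\varphi$ contributes a well-defined Gaussian factor. Evaluating this Gaussian at the diagonal, the factor $s_0(x,x) = \frac{1}{2}\pi^{-n}\det(\mathcal{L}_x)$ from the leading symbol of $\mathcal{S}$ combines with the Hessian determinant of $\varphi$ along $\{x=y\}$ (which contributes $\det(\mathcal{L}_x)^{-1}$ times a Jacobian converting the $g^{TX}$-induced volume to the contact volume $\omega_0\wedge(d\omega_0)^{n-1}$) and with the spectral factor $\chi(t\sigma_Q(\omega_0(x)))$ inherited from functional calculus; these together produce the claimed normalization $A_0(x,x,t) = \frac{1}{2\pi^n}\frac{dv_{\omega_0}}{dv_X}(x)\,\chi(t\sigma_Q(\omega_0(x)))\,t^{n-1}$. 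Finally, the support constraint $\supp_t A(x,y,t,k)\subset I\Subset\R_+$ reflects that $\chi_k$ is supported in eigenvalues $\lambda\in k\cdot\supp\chi$, which after the rescaling confines $t$ to a compact interval determined by $\supp\chi$ and the range of the positive function $\sigma_Q(\omega_0)$ on $X$.
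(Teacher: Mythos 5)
Your proposal takes exactly the same approach as the paper: the paper's entire proof consists of the single remark that the argument is ``completely analogous to the proof of \cite[Theorem 1.1]{HHMS23} on account of the structure of $\mathcal{S}$ as a Fourier integral operator given in Theorem \ref{t-gue230519yydI},'' which is precisely your opening observation. Your subsequent outline of the key steps from \cite{HHMS23} (Helffer--Sj\"ostrand, parametrix for $\mathcal{S}(z-\mathcal{T}_Q)^{-1}\mathcal{S}$, Melin--Sj\"ostrand complex stationary phase, and the normalization of $A_0$) is a faithful, if slightly heuristic, summary of what that proof does, so the proposal is correct.
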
 
%---
The proof of Theorem \ref{thm:ExpansionMain} is completely
analogous to the proof of \cite[Theorem 1.1]{HHMS23} on account
of the structure of $\mathcal{S}$ as a Fourier integral operator 
given in Theorem \ref{t-gue230519yydI}.
%---
\begin{proof}[Proof of Theorem~\ref{t-gue230528yydmp}]
We will apply Theorem \ref{thm:ExpansionMain}
for $X=\partial{M}$ as in Theorem \ref{t-gue230528yydmp}.
The metric $g^{TX}$ in (a) is induced by the metric $g^{TM'}$
and the contact form $\omega_0$ in (b) is given by 
\eqref{eq:omega_0}-\eqref{eq:2.12b}.
The operator $\mathcal{S}$ in (c) is the operator constructed
in Theorem \ref{t-gue230519yydI}, which in particular
fulfills \eqref{e-gue230519yyd}. Moreover, we apply 
Theorem \ref{thm:ExpansionMain} for $Q=\mathcal{R}$
given by \eqref{e-gue230527yydab}.
In this situation, we have 
\begin{equation}
\frac{dv_{\omega_0}}{dv_X}(x)=\det(\cL_x),\quad
\sigma_{\mathcal{R}}(\omega_0)=\omega_0(T).
\end{equation}
By \eqref{e-gue190312scdqII} and \eqref{eq:omegaT} we have
$\sigma_{\mathcal{R}}(\omega_0)=\omega_0(T)>0$ on $X$.

We first prove (i).
Let $\tau, \hat\tau\in\cC^\infty(\ol M)$, 
$\supp\tau\cap\supp\hat\tau=\emptyset$. We have 
\begin{equation}\label{e-gue230527ycdp}
\begin{split}
&\tau\chi_k(T_R)\hat\tau\\
&=\tau P\chi_k(\mathcal{T}_{\mathcal{R}})(P^*P)^{-1}P^*\hat\tau\\
&=\tau P\tau_1\chi_k(\mathcal{T}_{\mathcal{R}})\hat\tau_1(P^*P)^{-1}P^*\hat\tau
+\tau P(1-\tau_1)\chi_k(\mathcal{T}_{\mathcal{R}})\hat\tau_1(P^*P)^{-1}P^*\hat\tau\\
&\quad+\tau P\chi_k(\mathcal{T}_{\mathcal{R}})(1-\hat\tau_1)(P^*P)^{-1}P^*\hat\tau,
\end{split}
\end{equation}
where $\tau_1, \hat\tau_1\in\cC^\infty(X)$, $\supp\tau_1
\cap\supp\hat\tau_1=\emptyset$, $\supp\tau\cap\supp(1-\tau_1)
=\emptyset$, 
\[\supp(1-\hat\tau_1)\cap\supp\hat\tau=\emptyset.\] 
We apply now Theorem~\ref{thm:ExpansionMain} 
%\cite[Theorem 1.1]{HHMS23}, 
for the operator $Q=\mathcal{R}$ and 
we see that 
$\tau_1\chi_k(\mathcal{T}_{\mathcal{R}})\hat\tau_1=O(k^{-\infty})$ 
and hence 
\begin{equation}\label{e-gue230527ycdr}
\tau P\tau_1\chi_k(\mathcal{T}_{\mathcal{R}})\hat\tau_1(P^*P)^{-1}P^*\hat\tau
=O(k^{-\infty})\ \ \mbox{on $\ol M\times\ol M$}.
\end{equation}
From~\cite[Lemma 4.1]{HM19}, we see that 
\begin{equation}\label{e-gue230527ycdq}
(\tau P(1-\tau_1))(x,y)\in\cC^\infty(\ol M\times X),
\end{equation}
where $(\tau P(1-\tau_1))(x,y)$ denotes the distribution kernel 
of $\tau P(1-\tau_1)$. From \eqref{e-gue230527ycdq}, 
we can repeat the proof of~\cite[Theorem 4.6]{HHMS23} with 
minor changes and deduce that  
\begin{equation}\label{e-gue230527ycdu}
\tau P(1-\tau_1)\chi_k(\mathcal{T}_{\mathcal{R}})
\hat\tau_1(P^*P)^{-1}P^*\hat\tau=O(k^{-\infty})\ \ 
\mbox{on $\ol M\times\ol M$}.
\end{equation} 
Similarly, from~\cite[Lemma 4.2]{HM19}, we see that 
\begin{equation}\label{e-gue230528yyd}
((1-\hat\tau_1)(P^*P)^{-1}P^*\hat\tau)(x,y)\in\cC^\infty(\ol X\times\ol M),
\end{equation}
where $((1-\hat\tau_1)(P^*P)^{-1}P^*\hat\tau)(x,y)$ denotes the distribution kernel of $(1-\hat\tau_1)(P^*P)^{-1}P^*\hat\tau$. From \eqref{e-gue230528yyd}, we can repeat the proof of~\cite[Theorem 4.6]{HHMS23} with 
minor changes and deduce that  
\begin{equation}\label{e-gue230528yydI}
\tau P\chi_k(\mathcal{T}_{\mathcal{R}})(1-\hat\tau_1)(P^*P)^{-1}P^*
\hat\tau=O(k^{-\infty})\ \ \mbox{on $\ol M\times\ol M$}.
\end{equation} 

From \eqref{e-gue230527ycdp}, \eqref{e-gue230527ycdr}, 
\eqref{e-gue230527ycdu} and \eqref{e-gue230528yydI}, we get 
\eqref{e-gue230527yydzz}.
%\eqref{e-gue230527yydz}.

We prove now (ii) and (iii).
Fix $p\in\ol M$. We first assume that $p\notin X$ 
and let $U$ be an open set of $p$ 
with $U\cap X=\emptyset$. Let $\tau\in\cC^\infty_c(U)$. 
Since 
$(\tau P)(x,y)\in\cC^\infty(\ol M\times X)$, we can repeat 
the proof of~\cite[Theorem 4.6]{HHMS23} with minor changes and get 
\begin{equation}\label{e-gue230528yydm}
\tau P\chi_k(\mathcal{T}_{\mathcal{R}})(P^*P)^{-1}P^*
=O(k^{-\infty})\:\:\text{on $\ol M\times\ol M$}.
\end{equation}
From \eqref{e-gue230527ycda} and \eqref{e-gue230528yydm}, 
we get \eqref{e-gue230528ycdz}. %\eqref{e-gue230528ycd}. 

Now, assume that $p\in X$ and let $U$ be an open local 
coordinate patch of $p$ in $M'$. Let $D:=U\cap X$. 
We can repeat the proof of~\cite[Theorem 1.1]{HHMS23} 
(in the situation of Theorem \ref{thm:ExpansionMain}) and deduce 
\begin{equation}
\label{e-gue230528yydn}
\chi_k(\mathcal{T}_{\mathcal{R}})(x,y)=\int_0^{+\infty} 
e^{ikt\varphi(x,y)}a(x,y,t,k)dt+O\left(k^{-\infty}\right)
\quad\text{on $D\times D$},
\end{equation}
where $a(x,y,t,k)\in S^{n}_{\mathrm{loc}}
(1;D\times D\times{\mathbb R}_+)$,
\[
\begin{split}
&a(x,y,t,k)\sim\sum_{j=0}^\infty a_{j}(x,y,t)k^{n-j}~
\mathrm{in}~S^{n}_{\mathrm{loc}}(1;D\times D\times{\mathbb R}_+),\\
&a_j(x,y,t)\in\mathscr{C}^\infty(D\times D\times{\mathbb R}_+),~j=0,1,2,\ldots,\\
&a_{0}(x,x,t)=\frac{1}{2\pi ^{n}}
\det(\cL_x)\,\chi(t\omega_0(T(x)))t^{n-1}\not\equiv 0,
\end{split}
\]
and for some compact interval $I\Subset\mathbb R_+$,
\[
\begin{split}
\supp_t a(x,y,t,k),~\supp_t a_j(x,y,t)\subset I,\ \ j=0,1,2,\ldots.
\end{split}\]
From \eqref{e-gue230528yydn}, we can repeat the 
WKB procedure in~\cite[Part II,Proposition 7.8, Theorem 7.9]{Hsiao08} and get 
\eqref{e-gue230528ycdsz}.
%\eqref{e-gue230528ycds}.
\end{proof}

\begin{proof}[Proof of Corollary \ref{cor:trace}]
The asymptotics $\chi_k(T_R)(x,x)=O(k^{-\infty})$, $k\to\infty$,
on $M$ from \eqref{eq:tr1} follow immediately from 
\eqref{e-gue230528ycdz}.
Let $p\in X$ be fixed and consider local coordinates near $p$ on $M'$
of the form $z=(x_1,\ldots,x_{2n-1},\rho)$, where
$x=(x_1,\ldots,x_{2n-1})$ are local coordinates on $X$ near 
$p$ with $x(p)=0$ and the phase function $\psi$ 
in \eqref{e-gue230528ycdsz} has the form \eqref{e-gue230319ycdaIm}.
 In this local chart we have near $(p,p)$,
\begin{equation}\label{e-gue230319ycdaIm2}
i\Psi(z,z)=2\rho(z)\big(1+O(\abs{z}\big)+O(\abs{z}^3).
\end{equation}
By \eqref{e-gue230528ycdsz} we have
\begin{equation}\label{eq:1.8d}
\chi_k(T_P)(z,z)=\sum_{j=0}^{\infty} k^{n+1-j}\int_0^{+\infty}
e^{ikt\psi(z,z)}b_j(z,z,t)\,dt+O(k^{-\infty})
\end{equation}
Since $\Psi(x,x)=0$ for $x\in X$ this yields the asymptotic expansion
\eqref{eq:1.8} with the coefficients \eqref{eq:1.8a}.
The expression \eqref{eq:1.8b} of $b_0(x)$ follows from
\eqref{e-gue230528ycdtz}. We have $b_0(x)>0$ for every $x\in X$.
Note also the exponential decay of the integrands in 
\eqref{eq:1.8d} for $z\in M$ near $p$ due to 
\eqref{e-gue230319ycdaIm2} and on account of $\rho(z)<0$.

The trace of the operator $\chi_k(T_P)$
is given by 
\begin{equation}\label{eq:trace}
\begin{split}
\operatorname{Tr}\chi_k(T_P)&=\int_M\chi_k(T_P)(z,z)\,dv_{M'}(z)\\[2pt]
&=\int_{\{\rho<\varepsilon\}}\chi_k(T_P)(z,z)\,dv_{M'}(z)+
\int_{\{\varepsilon\leq\rho<0\}}\chi_k(T_P)(z,z)\,dv_{M'}(z)\\[2pt]
&=:I_1(k)+I_2(k).
\end{split}
\end{equation}
where $\varepsilon<0$ is chosen small enough.
We have $I_1(k)=O(k^{-\infty})$ by \eqref{eq:tr1}.
By using \eqref{e-gue230319ycdaIm2} and \eqref{eq:1.8d}
and the fact that $2k\int_\varepsilon^0e^{2k\rho}d\rho\to1$ as $k\to\infty$,
we obtain that there exist $C_1,C_2>0$ such that
$C_1k^n\leq I_2(k)\leq C_2k^n$ for $k$ large enough.
\end{proof}
\begin{remark}
It is interesting to compare the result of Corollary \ref{cor:trace}
to the corresponding result
regarding Toeplitz operators on the boundary $X$
(see also \cite[Corollary 1.2]{HHMS23}).
By Theorem \ref{thm:ExpansionMain} we have for the
operator $\mathcal{T}_{\mathcal{R}}$
from \eqref{e-gue230527yyda},
\begin{equation}\label{eq:1.8bf}
\chi_k(\mathcal{T}_{\mathcal{R}})(x,x)=
\sum_{j=0}^{\infty} A_j(x)k^{n-j}~
\text{in $S^{n+1}_{\rm loc}(1;X)$ on $X$},
\end{equation}
where 
\begin{equation}\label{eq:1.8af}
A_j(x)=\int_0^{+\infty}A_j(x,x,t)dt,\quad j\in\N_0,
\end{equation}
with $A_j(x,x,t)$ as in \eqref{Eq:LeadingTermMainThm}, and
\begin{equation}\label{eq:1.8ba}
A_0(x)=\frac{1}{2\pi ^{n}}\det(\cL_x)\int_0^{+\infty}
\chi(t\omega_0(T(x)))t^{n-1}dt.
\end{equation}
Moreover,
\begin{equation}\label{eq:1.9a}
\operatorname{Tr}\chi_k(\mathcal{T}_{\mathcal{R}})=
\frac{k^{n}}{2\pi^{n}}
\int_X\int_{0}^{+\infty}\det(\cL_x)\chi(t\omega_0(x))\,t^{n-1}
dt+O(k^{n-1})\,.
\end{equation}
We see that $\chi_k(T_{R})(x,x)$ and
$\chi_k(\mathcal{T}_{\mathcal{R}})(x,x)$ have an asymptotic expansion on
the boundary $X$, the former with leading term of order $k^{n+1}$, the latter
of order $k^n$.
On the other hand both traces $\operatorname{Tr}\chi_k(T_{R})$ and
$\operatorname{Tr}\chi_k(\mathcal{T}_{\mathcal{R}})$ have growth of order
$k^n$ as $k\to+\infty$.
\end{remark}

\begin{remark}
If we do not normalize the definition function $\rho$ such that
$|d\rho|=1$, Theorem \ref{t-gue230528yydmp} holds with the same
proof, but we need to take
$\omega_0=-J\circ d(\rho/|d\rho|)$. With this $\omega_0$ 
the leading term $b_{0}(x,x,t)$ has the same formula as in
\eqref{e-gue230528ycdtz}.
\end{remark}

\bibliographystyle{plain}

\end{document}